\documentclass{article}
\usepackage[margin= 22mm,bottom=30mm, top= 21mm]{geometry}
\usepackage{amsthm}
\usepackage{amsmath}
\usepackage{amssymb}
\usepackage{setspace}
\usepackage{mathtools}
\usepackage{verbatim}
\usepackage{a4wide}
\usepackage{csquotes}
\usepackage[hidelinks]{hyperref}
\usepackage{cleveref}
\usepackage{enumitem}
\setlist[itemize]{leftmargin=*} 
\setlist[enumerate]{leftmargin=*}
\usepackage{framed}
\usepackage{floatrow}
\usepackage[T1]{fontenc}
\usepackage{bbm}

\usepackage{mathdots}
\usepackage{xcolor}
\usepackage{diagbox}
\usepackage{colortbl}

\usepackage{graphicx}
\usepackage{subcaption}

\theoremstyle{plain}

\newtheorem{theorem}{Theorem}[section]
\newtheorem{claim}[theorem]{Claim}

\newtheorem{lemma}[theorem]{Lemma}

\theoremstyle{definition}

\newtheorem*{defn*}{Definition}
\newtheorem*{lem*}{Lemma}
\newtheorem*{claim*}{Claim}

\def\domagoj#1{{}{\textcolor{blue}{#1} }}

\expandafter\def\expandafter\normalsize\expandafter{%
    \normalsize
    \setlength\abovedisplayskip{4pt}
    \setlength\belowdisplayskip{4pt}
    \setlength\abovedisplayshortskip{4pt}
    \setlength\belowdisplayshortskip{4pt}
}

\usepackage[square,sort,comma,numbers]{natbib}
\def\domagoj#1{}

\let\domagoj=\domagojOpt 

\def\eps {\varepsilon}
\DeclareMathOperator{\E}{\mathbb{E}}

\newcommand{\cT}{\mathcal{T}}

\newcommand{\bF}{\mathbb{F}}

\newcommand{\ba}{\mathbf{a}}
\newcommand{\bb}{\mathbf{b}}

\newcommand{\br}{\mathbf{r}}
\newcommand{\bx}{\mathbf{x}}
\newcommand{\by}{\mathbf{y}}
\newcommand{\bz}{\mathbf{z}}

\renewcommand{\Pr}{\mathbb{P}}
\renewcommand{\E}{\mathbb{E}}

\newcommand{\fwi}[1]{\overrightarrow{i_{#1}}}

\makeatletter
\newcommand{\bigplus}{%
  \DOTSB\mathop{\mathpalette\mattos@bigplus\relax}\slimits@
}
\newcommand\mattos@bigplus[2]{%
  \vcenter{\hbox{%
    \sbox\z@{$#1\sum$}%
    \resizebox{!}{0.9\dimexpr\ht\z@+\dp\z@}{\raisebox{\depth}{$\m@th#1+$}}%
  }}%
  \vphantom{\sum}%
}
\makeatother

\title{Off-diagonal Ramsey numbers}
\author{Domagoj Brada\v{c} \thanks{Institute of Mathematics, EPFL, Lausanne, Switzerland. Email: \textbf{domagoj.bradac@epfl.ch}}}
\date{}
\begin{document}

\maketitle

\begin{abstract}
    For positive integers $s$ and $k$, the Ramsey number $r(s,k)$ is the minimum integer $n$ such that any graph on $n$ vertices contains a clique of size $s$ or an independent set of size $k$. We prove that for any fixed $s \ge 3$ and $k$ tending to infinity, the off-diagonal Ramsey numbers satisfy 
    \[ r(s, k) \ge \Omega \left(\frac{k^{s-1}}{(\log k)^{2s-4}} \right), \] which matches, up to polylogarithmic factors, the upper bound established over 90 years ago by Erd\H{o}s and Szekeres. For $s \ge 5,$ this improves the best known lower bound of the form $r(s, k) \ge k^{\frac{s+1}{2} + o(1)}$ which was first established by Spencer in 1977 and has since only seen polylogarithmic improvements.
\end{abstract}

\section{Introduction}
Ramsey theory is a branch of mathematics guided by the philosophy that any large structure, no matter how disordered, contains a fairly large ordered substructure. The field bears its name after the logician F. P. Ramsey~\cite{ramsey}, but Ramsey theoretic statements and techniques arise in many other fields including set theory, topology, discrete geometry, theoretical computer science, ergodic theory, additive number theory and, of course, the larger field of combinatorics. Notable examples include van der Waerden's theorem on arithmetic progressions in the integers~\cite{vanderwaerden}, Schur's theorem stating that Fermat's last theorem is false modulo primes~\cite{schur}, Rado's partition regularity for systems of linear equations~\cite{rado-partition}, or the more recent Gowers' infinite Ramsey theorem for Banach spaces~\cite{gowers-banach}.  We refer to the book by Graham, Rothschild and Spencer~\cite{graham-rothschild-spencer} for an introduction to the area.

For positive integers $s$ and $k$, the \emph{Ramsey number}, $r(s, k)$ is the minimum integer $n$ such that any graph on $n$ vertices contains a clique of size $s$, which we denote by $K_s$, or an independent set of size $k$. The fact that these numbers are finite is a special case of the seminal theorem of Ramsey~\cite{ramsey} from 1929 and since then, a great deal of effort has been expended toward estimating these numbers as well as various related quantities -- see the survey by Conlon, Fox and Sudakov~\cite{CFSsurvey} for a broad overview of the many related questions. The two most natural regimes to consider are when $s = k,$ the so-called \emph{diagonal Ramsey numbers}, and when $s$ is fixed and $k \rightarrow \infty$, the so-called \emph{off-diagonal Ramsey numbers}. For a detailed history of the diagonal Ramsey numbers, we refer to the recent breakthrough work of Campos, Griffiths, R. Morris and Sahasrabudhe~\cite{CGMS} and to the follow-up works of Gupta, Ndiaye, Norin and Wei~\cite{gupta} for quantitative improvements and of Balister, Bollob\'{a}s, Campos, Griffiths, Hurley, R. Morris, Sahasrabudhe and Tiba~\cite{balister} for a multicolor generalization. The main focus of this paper are the off-diagonal Ramsey numbers.

The first reasonable quantitative upper bounds on the Ramsey numbers were given by Erd\H{o}s and Szekeres~\cite{erdos-szekeres} in 1935, who showed that for any positive $s$ and $k$, 
\begin{equation} \label{eq:erdos-szekeres}
    r(s, k) \le \binom{k+s-2}{s-1}.
\end{equation}

For the off-diagonal Ramsey numbers, \eqref{eq:erdos-szekeres} implies that $r(s, k) = O(k^{s-1})$, for any fixed $s$. This was improved by Ajtai, Koml\'{o}s and Szemer\'{e}di~\cite{ajtai} who showed that $r(s, k) = O(k^{s-1} / (\log k)^{s-2})$, and the leading constant was improved to $(1+o(1))$ by Li, Rousseau, and Zang~\cite{li-rousseau-zang}, generalizing the work of Shearer~\cite{shearer} who obtained such an improvement for $s=3$.

Using the so-called Lov\'{a}sz local lemma, Spencer~\cite{spencer75, spencer77} proved the first nontrivial lower bound for general $s$, showing that $r(s, k) = \Omega\left((k / \log k)^{\frac{s+1}{2}}\right).$ The polylogarithmic factor was slightly improved by Bohman and Keevash~\cite{bohman-keevash-Ks} by analyzing the random $K_s$-free process, the result of which is that for fixed $s \ge 5$, the best known bounds on $r(s, k)$ are

\begin{equation} \label{eq:old-bounds}
    \Omega\left(\frac{k^{\frac{s+1}{2}}}{(\log k)^{\frac{s+1}{2} - \frac{1}{s-2}}} \right) \le r(s, k) \le (1 + o(1))\frac{k^{s-1}}{(\log k)^{s-2}}.
\end{equation} 

The exponent in the above lower bound corresponds to the so-called deletion threshold for $K_s$ and it represents a natural barrier unlikely to be surpassed by any purely probabilistic construction\footnote{Although, for the so-called odd cycle-complete Ramsey numbers, the deletion threshold has recently been broken by Campos, Jenssen, Michelen, Pfender and Sahasrabudhe~\cite{oddcyclecomplete} by superimposing blow-ups of random graphs.}. The conjecture that for any fixed $s$, $r(s, k) \ge \frac{k^{s-1}}{(\log k)^c},$ for some $c = c(s)$, appears to have been made by Erd\H{o}s in 1947~\cite{chung-graham} and several authors~\cite{spencer77, mubayi-verstraete, CMMV-zarankiewicz} have tepidly supported the conjecture that at least $r(s, k) \ge k^{s-1+o(1)}$ should hold. Our main result proves this conjecture and thus determines off-diagonal Ramsey numbers up to polylogarithmic factors.

\begin{theorem} \label{thm:main}
    For any $s \ge 3$, there is a positive constant $c_s$ such that for any $k \ge 2,$
    \[ r(s, k) \ge c_s \frac{k^{s-1}}{(\log k)^{2s-4}}. \]
\end{theorem}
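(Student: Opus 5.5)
The plan is a structured construction in the spirit of Mattheus and Verstraete's $r(4,k)$ argument. First choose a linear hypergraph (a partial design) $\mathcal{H}$ on a point set $V$ of size $N$. It should be regular, with every vertex in $d$ blocks, every block of size $m$, and every pair of points in at most one block. Then define a random graph $G$ on $V$: independently for each block $B$, take a uniformly random partition of $B$ into $s-1$ parts and place on $B$ the complete $(s-1)$-partite graph with those parts. By design no copy of $K_s$ lies inside a single block. Since $\mathcal{H}$ is linear, $G$ is the edge-disjoint union of these block graphs. Any $K_s$ in $G$ must therefore use edges from several blocks, so it corresponds to a rigid configuration in $\mathcal{H}$: $s$ points whose $\binom{s}{2}$ pairs are covered by at least two distinct blocks, with each block meeting the configuration in a set that its random partition must split completely.

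The first step is to choose $\mathcal{H}$ algebraically over $\bF_q$ (points of a variety, blocks given by low-degree curves or subspaces, a higher-dimensional analogue of the Hermitian unital) so that two things hold. First, the number of such multi-block ``$K_s$-configurations'' is small compared with $N$, after accounting for the probability $\prod_B (\text{prob. } B\text{'s partition separates its points})$ that each one becomes a clique. This lets us delete one vertex from every surviving copy, in expectation at most $N/2$ vertices, and obtain a $K_s$-free graph on $\ge N/2$ vertices. Second, $\mathcal{H}$ should be pseudorandom: its point--block incidence matrix should have a strong spectral gap, so that by an expander-mixing estimate every set $I$ of $k$ points satisfies
\[ \sum_{B} \bigl(|I\cap B| - 1\bigr)_+ \;\ge\; c\, \frac{k^2 d m}{N}, \]
up to the error term coming from the second eigenvalue.

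The second step bounds the independence number. A set $I$ is independent in $G$ only if, for every block $B$, the points of $I\cap B$ all fall into one part of $B$'s partition. This happens with probability $(s-1)^{-(|I\cap B|-1)_+}$, independently over blocks. Hence
\[ \Pr[I \text{ independent}] \le \exp\Bigl(-c' \sum_B (|I\cap B|-1)_+\Bigr). \]
A union bound over the $\binom{N}{k}\le \exp(k\log N)$ sets of size $k$ then works as soon as $k\,dm/N \gg \log N$. This union bound is lossy, because it ignores the correlation between different $I$. So I would replace it with a container or ``count only sets in a dense sub-configuration'' argument, where the polylogarithmic losses arise. Tuning $q$, and hence $N,d,m$, so that the clique-deletion constraint and this independence constraint hold simultaneously should give $N \approx k^{s-1}/(\log k)^{2s-4}$. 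Finally, monotonicity in $k$ extends the bound from the values of $k$ the construction covers to all $k\ge 2$.

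The main obstacle is the first step: finding an explicit linear hypergraph whose multi-block $s$-point configurations are sparse enough for every $s$. For $s=4$ the Hermitian unital works because it has no O'Nan configurations. For general $s$ one needs an algebraic family in which any $s$ points covered pairwise by blocks are forced, up to a negligible set, to lie essentially in a single block. My first attempt would be a Hermitian-type variety in dimension $s-2$, with blocks its intersections with lines. I would try to prove the required rigidity by a Bézout-type count of the degenerate configurations. If that fails, I would pass to a random sub-hypergraph of the algebraic design, sampled so as to destroy the remaining configurations while keeping the spectral gap.
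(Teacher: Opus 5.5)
There is a genuine gap: your first step is not a step of the proof but the whole difficulty, and you leave it open. Everything downstream is the Mattheus--Verstra\"{e}te template: random block graphs, deletion of cliques, a container count and random sampling. That template only yields $k^{s-1+o(1)}$ if you feed it a linear hypergraph with two properties at once. It must be pseudorandom with the right parameters, and it must be essentially free of multi-block $K_s$-configurations. You do not construct such an $\mathcal{H}$. You only list things you would try: a Hermitian-type variety in dimension $s-2$ with line sections, a B\'ezout count, random thinning. An object of this kind is essentially what Conlon, Mattheus, Mubayi and Verstra\"{e}te reduced the problem to, conditional on ``natural conjectures'' that remain open. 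No such hypergraph was known for $s\ge 5$, which is why the lower bound had stayed near $k^{(s+1)/2}$. Random thinning of a design lowers $d$, and you give no reason it can destroy the bad configurations while keeping the density needed for exponent $s-1$. You never fix $N,d,m$ or the eigenvalue bound, so the claim that tuning $q$ gives $k^{s-1}/(\log k)^{2s-4}$ cannot be checked, including the exponent $2s-4$ on the logarithm.

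The template is also mis-specified as written. A complete $(s-1)$-partite graph on a block contains $K_{s-1}$. So any point joined through $s-1$ other blocks to $s-1$ separated points of one block completes a $K_s$ with constant probability. Take $s=4$ and the dual Hermitian unital, whose vertices are the $\approx q^4$ lines and whose blocks are the pencils of $q^2$ lines through a point. Each line $\ell$ and point $P\notin\ell$ give $\binom{q+1}{3}$ triples of lines through $P$ that meet $\ell$. That is about $q^{10}$ four-sets, each a clique with constant probability, far more than the $q^4$ vertices you could delete. So ``the unital works for $s=4$'' is false for your construction. Mattheus and Verstra\"{e}te use bipartite block graphs precisely so that every clique has its pairs in distinct blocks and O'Nan-freeness applies. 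Moreover, by linearity each part of a block's partition is independent in $G$. For $I$ inside such a part (possible whenever $k\le m/(s-1)$), $\sum_B(|I\cap B|-1)_+=k-1$, so your displayed lower bound fails there. A spectral gap controls $\sum_B\binom{|I\cap B|}{2}$, not $\sum_B(|I\cap B|-1)_+$.

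The paper needs no configuration-free design. It takes the polarity graph $G(s-1,q)$ and the digraph $D^*$ on orthogonal pairs $(a,b)$, with an arc $(a,b)\to(a',b')$ iff $a\perp b'$ and $a'\not\perp b$. A short linear-independence argument shows $D^*$ is $T_s$-free. Ordering its vertices randomly therefore gives a $K_s$-free graph with nothing to delete. A container-type marking argument, using the ranks $\dim W(y)$ and the expander mixing lemma on $G$, bounds the forward independent $k$-tuples by $(Cq^{s-1})^k$ for $k\ge Cq(\log q)^2$. Sampling then finishes the proof. That deterministic, linear-algebraic source of $K_s$-freeness is the idea your proposal is missing.
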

In the first version of this work, the author proved the weaker lower bound $r(s, k) \ge \Omega\left( \frac{k^{s-2}}{(\log k)^{2s-6}} \right)$. The final improvement to obtain Theorem~\ref{thm:main} was made by an internal model at OpenAI based on those ideas. We remark that Theorem~\ref{thm:main} improves the previously best known lower bounds for all $s \ge 5$. We shall shortly discuss the history of the  important special cases $s = 3$ and $s = 4$, focusing on the ideas that inspired the present approach. First, we mention several other applications, all of which follow from the author's original construction.

For general values of $s$ and $k$, in an excellent survey of recent breakthroughs in the area, R. Morris~\cite{morris-survey} showed that combining three proofs suitable for different regimes, the upper bound in~\eqref{eq:erdos-szekeres} can always be improved by an exponential in $s$, i.e. there is an absolute constant $\delta > 0$ such that for all sufficiently large $k$ and all $s \le k,$
\begin{equation} \label{eq:morris}
    r(s, k) \le e^{-\delta s} \binom{k+s-2}{s-1}.
\end{equation}

On the other hand, the first moment lower bound of Erd\H{o}s~\cite{erdos-lb} as well as the local lemma approach of Spencer can easily be adapted to give lower bounds for general $s$ and $k$. When $k = \Theta(s)$, the local lemma only provides a constant factor improvement over the first moment method argument. In a recent breakthrough, Ma, Shen and Xie~\cite{ma-shen-xie} have obtained an exponential improvement on this lower bound.

\begin{theorem}[Ma, Shen, Xie~\cite{ma-shen-xie}] \label{thm:ma-shen-xie}
    For any constant $C > 1$, there exists a constant $\eps = \eps(C) > 0$ such that for any sufficiently large $s$,
    \[ r(s, Cs) \ge (p_C^{-1/2} + \eps)^s, \]
    where $p_C \in (0, 1/2)$ is the unique solution to $C  = \frac{\log p_C}{\log (1 - p_C)}$.
\end{theorem}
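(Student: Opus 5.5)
The plan is to beat the first moment bound by replacing the independent random graph $G(n,p)$ with a random geometric graph. In such a graph the events ``$S$ is a clique'' and ``$T$ is independent'' are controlled by a Gram matrix constraint rather than by independent coins. First recall the baseline. In $G(n,p)$ the expected number of $K_s$'s is about $n^s p^{s^2/2}$, and the expected number of independent $k$-sets with $k=Cs$ is about $n^{k}(1-p)^{k^2/2}$. Both are below $1$ for $n\approx p^{-s/2}$ exactly when $p=p_C$, by the defining relation $C\log(1-p_C)=\log p_C$. The goal is to gain a factor $(1+\eps')^s$ in $n$. It suffices to make one of the two bad events exponentially rarer, by a factor $e^{-\Omega(s^2)}$, while keeping the other essentially as rare as in $G(n,p_C)$, and then to rebalance $p$ slightly.

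\textbf{Construction.} Take $n$ independent uniform points $x_1,\dots,x_n$ on the sphere $S^{d-1}$ with $d=\lambda s$ for a large constant $\lambda$. Also take an independent copy of $G(n,q)$. Declare $ij$ an edge if either the $G(n,q)$ coin says so, or $\langle x_i,x_j\rangle\le -\tau$ with $\tau=\beta/\sqrt d$. The combined edge density should be $p$ close to $p_C$, and the geometric part should have density $\Theta(1)$ but small. The mechanism is the following. If vectors $v_1,\dots,v_m$ have all pairwise inner products at most $-\tau$, then $0\le\|\sum v_i\|^2\le m-m(m-1)\tau$. Large ``geometric cliques'' are therefore constrained. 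More quantitatively, a large deviation estimate for the off-diagonal sum of the Gram matrix shows that $m$ random vectors with many pairwise inner products below $-\tau$ is far less likely than independence would predict. Correspondingly, the sum $\sum_{i<j}\langle v_i,v_j\rangle$ over a $k$-set is concentrated near $0$ with Gaussian tails of order $k^2/d$. So for $s$-sets the clique probability should be computed by conditioning on this sum.

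\textbf{Steps.} (1) Compute, up to $e^{o(s^2)}$ factors, $\Pr[S\text{ is a clique}]$ for $|S|=s$. Do this by writing $\Pr[\text{clique}]=\E\prod_{i<j}\bigl(1-(1-q)\mathbb 1[\langle x_i,x_j\rangle>-\tau]\bigr)$ and conditioning on the empirical Gram matrix. The PSD constraint should yield a bound $p^{\binom s2}e^{-\gamma s^2}$ with $\gamma=\gamma(\beta,\lambda)>0$. (2) Compute $\Pr[T\text{ independent}]$ for $|T|=k$. Here every pair must avoid both sources, so all pairwise inner products exceed $-\tau$. Show this costs at most $(1-p)^{\binom k2}e^{\gamma' k^2}$ with $\gamma'$ much smaller than $\gamma/C^2$. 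The reason is that positive correlation among ``inner product $>-\tau$'' events is weak when $\tau\sqrt d$ is small, and this can be quantified by a Gaussian approximation of the Gram entries. (3) Rebalance $p$ around $p_C$ using these two exponents. Then apply the deletion method: delete one vertex from each clique and each independent set. This leaves a graph on $(p_C^{-1/2}+\eps)^s$ vertices with neither.

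\textbf{Main obstacle.} Step (1) together with the comparison in step (2) is the crux. One needs large deviation estimates for the joint law of all $\binom m2$ inner products of $m=\Theta(d)$ random unit vectors that are sharp to within $e^{o(m^2)}$, in a regime where no inner product is individually atypical but their aggregate is. I would first attempt this by replacing the sphere with Gaussian vectors, writing the Gram matrix as a Wishart matrix, and using its explicit density. The events then become a variational problem over PSD matrices with constrained off-diagonal entries. One must then verify that the resulting gain $\gamma$ strictly dominates the loss $\gamma'$ for small $\beta$, which is a perturbative second-order computation in $\beta$.
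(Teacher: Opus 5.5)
\textbf{The paper does not prove this statement.} It is quoted from Ma, Shen and Xie, and the paper says only that their proof uses a random geometric graph. So your plan has to stand on its own. As set up, it has a genuine gap: with $d=\lambda s$ and $\tau=\beta/\sqrt d$, both Step (1) and Step (2) are false.

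\textbf{Where the heuristic fails.} You assume that, at the $e^{\Theta(s^2)}$ scale, the Gram entries of $\Theta(d)$ random unit vectors behave like independent Gaussians apart from the constraint $\|\sum_i x_i\|^2\ge 0$, with Gaussian tails for $\sum_{i<j}\langle x_i,x_j\rangle$. In linear dimension the upper tail is far from Gaussian, because low-rank deformations are essentially free.

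Fix a unit vector $w$ and put $a^2=3\sqrt{\log k/d}$. Each point satisfies $\langle x_i,w\rangle\ge a$ with probability at least $e^{-da^2/2-O(\log d)}$. So all $k=Cs$ points of $T$ do so with probability $e^{-O(s^{3/2}\sqrt{\log s})}=e^{-o(k^2)}$. Conditionally, every pairwise inner product is at least $a^2+\min\{0,\langle u_i,u_j\rangle\}$, where the $u_i$ are i.i.d.\ uniform on the sphere of $w^\perp$. That is a shift upward by $3\sqrt{\log k}$ standard deviations, and a union bound gives $\langle x_i,x_j\rangle>-\tau$ for all pairs w.h.p.

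Hence $\Pr[T\text{ independent}]\ge(1-q)^{\binom k2}e^{-o(k^2)}$. The geometric edges give no protection against independent sets, and your $\gamma'$ equals $\tfrac12\log\frac{1-q}{1-p}-o(1)$ rather than being small. Since trivially $\Pr[S\text{ clique}]\ge q^{\binom s2}$, first moment plus deletion yields at most $\min\{q^{-s/2},(1-q)^{-Cs/2}\}e^{o(s)}\le p_C^{-s/2}e^{o(s)}$. That is no gain at all.

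Step (1) fails in the same way. Put the points of $S$ into $r$ groups near the vertices of a regular simplex, again at cost $e^{-o(s^2)}$. Then every cross-group pair is a geometric edge, so $\Pr[S\text{ clique}]\ge q^{\binom s2/r}e^{-o(s^2)}$. This exceeds $p^{\binom s2}$ once $r$ is a large constant.

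\textbf{What is missing is the right scale.} A common shift of all inner products by $\Theta(1/\sqrt d)$ costs about $e^{-\Theta(s\sqrt d)}$. For it to cost $e^{-\Omega(s^2)}$, the dimension must be at least of order $s^2$. For $d\gg s^2$ the Gram law is essentially Gaussian at this scale, so there is nothing to gain.

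At $d\asymp s^2$, the departure from independence is governed by the law of $\|\sum_{i\in S}x_i\|^2\approx\frac{|S|}{d}\chi^2_d$. Its lower tail is lighter and its upper tail heavier than Gaussian. This is reflected in the triangle correlations $\E[X_{12}X_{23}X_{31}]\asymp d^{-1/2}$ for $X_{ij}=\sqrt d\langle x_i,x_j\rangle$.

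This effect penalises the negatively tilted clique configurations and rewards the positively tilted independent-set ones. Both amounts are of the same order, $|S|^3/\sqrt d$. So the loss in Step (2) is not negligible for any structural reason. The real content is to compute both effects exactly and check that the net effect after rebalancing $p$ is positive, which your plan does not provide.

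Separately, with $\tau=\beta/\sqrt d$ and $\beta$ small, the geometric density is close to $1/2>p_C$. So your ``small $\beta$'' regime is incompatible with a union of total density near $p_C$.
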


For context, we remark that the first moment method achieves the lower bound $r(s, Cs) \ge (p_C^{-1/2} + o(1))^s$. To prove this lower bound, Ma, Shen and Xie use a random geometric graph. Their result was reproved by Hunter, Milojevi\'{c} and Sudakov~\cite{hunter-milojevic-sudakov} using a similar construction, based on Gaussian random graphs, which allows for a simpler analysis and achieves better quantitative dependencies when $C \rightarrow \infty$; see also~\cite{lin-niu} for a sharper analysis leading to a small improvement in the final bounds.

If $s, k/s \rightarrow \infty$, the lower bounds given by the first moment method, the local lemma and the recent improvements in~\cite{ma-shen-xie},~\cite{hunter-milojevic-sudakov}~and~\cite{lin-niu} all yield a lower bound of the form
\begin{equation} \label{eq:spencer-general}
    r(s, k) \ge \left( \frac{k}{s} \right)^{(1 + o(1))s/2}.
\end{equation}
Therefore, if $s, k/s \rightarrow \infty$, the lower bound \eqref{eq:spencer-general} and the upper bound \eqref{eq:erdos-szekeres} (or the improved~\eqref{eq:morris}) imply that\linebreak $(1 / 2 + o(1)) s \log(k/s) \le \log r(s, k) \le (1 + o(1)) s \log(k/s)$. We close this gap by showing that the upper bound is asymptotically tight.

\begin{theorem} \label{thm:off-diagonal-general}
    For every $\delta > 0$, there exists a constant $L$ such that for any positive integers $s \ge L$ and $k \ge Ls$, it holds that 
    \[ r(s, k) \ge \left( \frac{k}{s}\right)^{(1 - \delta) s}. \]
\end{theorem}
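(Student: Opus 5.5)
The plan is to reuse the construction behind Theorem~\ref{thm:main}, tracking all dependence on $s$ explicitly, rather than to deduce the statement from the final bound of Theorem~\ref{thm:main}. The latter would give roughly $(k/(C s\log k))^{s-1}$ vertices, which is too weak when $k/s$ is only a large constant and $\log k \approx \log s$ is much bigger than $k/s$. Since the tradeoff between clique-freeness and independence number is exactly what the construction controls, I would take the author's $K_s$-free graph on $N=(k/s)^{(1-\delta)s}$ vertices and re-tune its density. The aim is an independence number below $k$ in which every logarithmic loss is $\log(k/s)$ rather than $\log k$, and every multiplicative loss is at most $e^{O(s)}$. Both losses are absorbed by the $\delta s$ slack in the exponent once $k/s\ge L$ with $L=L(\delta)$ large.

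\textbf{Step 1 (construction).} Fix the structured, layered or algebraic, $K_s$-free host coming from the original construction. Inside it, keep each candidate edge independently with probability $p$, choosing $p \approx C\,(s/k)\log(k/s)$ rather than $(\log k)/k$. Clique-freeness comes from the host structure: each of the $s-1$ layers or coordinates can host at most one vertex of a clique. It is therefore deterministic and independent of $p$.

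\textbf{Step 2 (independence number).} Apply a first-moment bound over $k$-sets $S$, but organize the union bound by the \emph{profile} of $S$, meaning how $S$ distributes over the layers. The number of profiles is only $e^{O(s)}\binom{N}{k}$-negligible. For each profile, I would lower-bound the number of host pairs inside $S$ by roughly $k^2/s$ via convexity. The failure probability is then at most $\exp(-p\,k^2/(Cs))$, which beats $\binom{N}{k}\le (eN/k)^k=\exp\bigl(k(1-\delta)s\log(k/s)/s\cdot O(1)\bigr)$ as soon as $pk^2/s \gg k\log(k/s)$. This is exactly what the choice of $p$ provides.

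\textbf{Step 3 (parameter count).} Check that the host can have $N$ vertices while the induced random graph still has no $K_s$ and the computation in Step~2 goes through. Setting the layer size to about $k/(s\log(k/s))$ raised to a suitable power, the total vertex count is $(k/s)^{s}/(C\log(k/s))^{O(s)}$. This is at least $(k/s)^{(1-\delta)s}$ provided $(k/s)^{\delta}\ge (C\log(k/s))^{O(1)}$, which holds for $k/s\ge L(\delta)$.

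The main obstacle will be Step~2: making the union bound over $k$-sets insensitive to $\log k$. The difficulty is that naive counting of sets concentrated on few layers costs factors like $\binom{N}{k}$ with $\log N \sim s\log k$. I would first try to show that sets with skewed profiles automatically contain many more host pairs, which compensates for this cost. If that fails, I would instead cap the layer sizes so that every $k$-set must meet $\Omega(s)$ layers substantially.
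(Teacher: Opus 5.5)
There is a genuine gap, and it lies in the construction itself, not only in the bookkeeping of Step~2. The only concrete mechanism you give for $K_s$-freeness is that each of the $s-1$ layers contains at most one vertex of any clique. The paper's graphs have no such structure, and the mechanism cannot work. Two adjacent vertices in one layer would already put two clique vertices there, so every layer must be an independent set of the host. The host is then properly $(s-1)$-colourable and has an independent set of size at least $N/(s-1)$, which for $N=(k/s)^{(1-\delta)s}$ is vastly larger than $k$. Keeping host edges with probability $p$ only enlarges independent sets, so no union bound over profiles can succeed: a $k$-set inside one layer contains no host pairs and is independent with probability $1$.

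Your fallbacks point the wrong way. The number of cross-layer pairs $\binom{k}{2}-\sum_i\binom{k_i}{2}$ is largest for balanced profiles and zero for concentrated ones, so Jensen bounds it from above, not below. Capping layer sizes below $k$ forces $N<(s-1)k$. There is also a factor-$s$ slip in Step~2: $\log\binom{N}{k}\approx(1-\delta)ks\log(k/s)$, so with $k^2/s$ host pairs you would need $p\gtrsim s^2\log(k/s)/k$, which exceeds $1$ when $k=Ls$. Step~3's count is unsupported too, since $s-1$ layers of size $M$ give only $(s-1)M$ vertices.

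The missing idea is that clique-freeness should come from a $T_s$-free \emph{digraph} combined with a random vertex ordering, not from a bounded colouring. The paper uses the weaker construction of Subsection~\ref{subsec:old}, not the one behind Theorem~\ref{thm:main}:
\begin{itemize}
\item \textbf{Construction.} Take $D=D(s-2,q)$ on ordered non-adjacent pairs $(a,b)$ of $G(s-2,q)$, with an arc $(a,b)\to(a',b')$ when $ab'\in E(G)$. It is $T_s$-free by the linear-independence argument of Lemma~\ref{lem:Hs-free}. Keeping forward arcs under a random permutation gives a $K_s$-free graph with at most $\fwi{k}(D)/k!$ independent $k$-sets in expectation (Lemma~\ref{lem:Ts-free-Ks-free}).
\item \textbf{Counting.} Forward independent $k$-tuples are counted by the expander-mixing/tree argument of Lemma~\ref{lem:old-fw-indep-counting}. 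At each step, either the appended pair lies in $A^\sigma\times B^\sigma$, a set of size at most $4\lambda^2n^2/d^2\lesssim n^2/d$, or $B^\sigma$ shrinks by a factor $1-d/(2n)\approx 1-1/(2q)$. Hence there are at most $w=32tq\log q$ expensive steps.
\item \textbf{Parameters.} $\log(k/s)$ rather than $\log k$ enters only through the choice $q\approx\frac{\delta}{200}\frac{k}{s}/\log(k/s)$. This gives $w\le\delta k/5$, hence $\fwi{k}(D)\le(32\,d^{-(1-\delta/5)}n^2)^k$.
\item \textbf{Sampling.} Vertex sampling with $p=d^{1-\delta/5}/n^2$ (Lemma~\ref{lem:sampling-Ks-free}) yields $r(s,k)\ge d^{1-\delta/5}/4\ge(q^{s-3}/2)^{1-\delta/5}/4\ge(k/s)^{(1-\delta)s}$, using $q\ge(k/s)^{1-\delta/2}$ and $s\ge L$.
\end{itemize}
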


Note that Theorem~\ref{thm:off-diagonal-general} improves Theorem~\ref{thm:ma-shen-xie} for any sufficiently large $C$. We are also able to improve these bounds for $C$ close to $1$. More formally, for some positive $\gamma > 0$, we obtain an exponential improvement for any $C \in (1, 1+\gamma)$.
\begin{theorem} \label{thm:k-Ck}
    Let $C > 1$ be a fixed constant. Then, for large enough $s$,
    \[ r(s, Cs) \ge (2^{1 - \frac{1}{2C}})^s. \]
\end{theorem}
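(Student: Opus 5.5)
The plan is a structured random construction analysed by the first moment method followed by deletion. The structure should let cliques be cheap to forbid while independent sets stay expensive. We fix $C>1$ and write $k=Cs$. Write $\log_2$ for the logarithm to base $2$, and set $N=2^{(1-\frac{1}{2C})s}$ as the target size. Plain $G(n,1/2)$ only reaches $2^{s/2}$, so the first thing to do is break the symmetry between $K_s$ and independent $k$-sets. The natural tool is a two-level (blow-up type) construction.

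We take a random graph $H$ on $M$ "blobs" and replace each blob $u$ by a set $B_u=\{u\}\times\{0,1\}^t$.
\begin{itemize}
\item Inside a blob we place a fixed graph of clique number at most $s/2$. Between two blobs $u\neq v$ we put a random bipartite graph, but only if $uv\in E(H)$; otherwise $B_u$ and $B_v$ are completely anticomplete.
\item Cross-edges are correlated. For each edge $uv$ of $H$ we choose a random linear form $\ell_{uv}$. We join $(u,x)$ and $(v,y)$ iff $\ell_{uv}(x,y)=0$ (over $\mathbb{F}_2$).
\end{itemize}
With this correlation, a set hitting many blobs in few vertices each is constrained linearly. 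This is the mechanism we hope will beat independent randomness. The parameters are $M$, $t$ and the density of $H$. We choose them so that $\log_2 M\approx s/(2C)\cdot(\text{const})$ and $t\approx(1-\frac1C)s$, which makes $N=M2^t$ match the target exponent. This choice is a guess that will be adjusted once the bookkeeping is done.

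The key steps, in order, are as follows.
\begin{enumerate}
\item An independent set of size $k$ meets each blob in an independent set of the intra-blob graph. We show that, blob by blob, the expected number of independent $k$-sets is at most $N/4$. The count is organized by the profile $(a_1,\dots,a_b)$ of intersection sizes with blobs. Within one blob the number of choices is at most $2^{ta_i}$. Across blobs, each pair of chosen vertices in adjacent blobs must avoid an edge, which costs a factor $1/2$ per pair, except for the linear dependencies. These are controlled by a rank argument: $a$ vectors in $\mathbb{F}_2^t$ impose at least $\min(a,t)$ independent constraints.
\item We do the same for $K_s$. Here cliques are confined: either they live mostly inside one blob, which the intra-blob choice kills, or they span $b\ge2$ blobs that form a clique in $H$, which costs $p_H^{\binom b2}$.
\item We delete one vertex from each bad set, which leaves at least $N/2$ vertices.
\end{enumerate}
Optimizing the exponents is a one-variable calculus problem in the ratio $t/s$. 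That optimization is what should produce the $1-\frac1{2C}$.

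The main obstacle is step 1 for mixed profiles. These are independent sets that take a moderate number of vertices from each of many blobs. For them the linear correlation in $\ell_{uv}$ both saves and costs entropy, and a crude union bound over profiles loses exactly the $2^{\Theta(s)}$ we are trying to gain. We would first try to bound the count by a product over blobs of $2^{t\min(a_i,t)}\cdot 2^{-\mathrm{rank}}$, using a rank lemma for random bilinear constraints between two sets of vectors. If that is too lossy, the fallback is to replace the linear forms by fully independent edges, but only between blobs adjacent in $H$. We would then recover the saving from the density of $H$ alone, at the cost of a worse but still exponential improvement for $C$ close to $1$.
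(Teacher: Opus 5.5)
Your proposal is a research plan rather than a proof, and it has a genuine gap. The parameters are guessed. The optimization that is supposed to produce $1-\frac{1}{2C}$ is never carried out. You concede that the central count in Step~1 may lose the $2^{\Theta(s)}$ you need, and your fallback gives a weaker bound than the theorem. Beyond that, both mechanisms you rely on fail as stated.

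\textbf{The linear forms.} A linear form on $\mathbb{F}_2^t\times\mathbb{F}_2^t$ has the shape $\ell_{uv}(x,y)=\langle\alpha,x\rangle+\langle\beta,y\rangle$. So the bipartite graph between $B_u$ and $B_v$ is a disjoint union of two complete bipartite graphs on half-blobs. This has two consequences.
\begin{itemize}
\item The half $\{x:\langle\alpha,x\rangle=0\}$ of $B_u$ is anticomplete to the half $\{y:\langle\beta,y\rangle=1\}$ of $B_v$. Independent sets of the intra-blob graph lying in these halves therefore combine at no cost.
\item Cliques lying in matching halves merge into a single clique. Over the choice of $\ell_{uv}$ this costs only $2^{-O(t)}$, not $2^{-\Theta(s^2)}$. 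Hence a $K_s$ split as $\frac{s}{2}+\frac{s}{2}$ across an edge of $H$ is not excluded by bounding the intra-blob clique number by $s/2$, and it costs far more than the factor $p_H$ you charge for it.
\end{itemize}
The correlation therefore destroys the saving rather than creating it.

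\textbf{The rank claim.} Even if you meant bilinear forms $x^{T}A_{uv}y$, your claim is reversed. A set of $a$ vectors in $\mathbb{F}_2^t$ imposes at most $\min(a,t)$ independent constraints, namely its rank. A set of $a$ distinct vectors can have rank only about $\log_2 a$. The probability that $X\times Y$ is anticomplete is either $0$ or $2^{-\mathrm{rk}(X)\,\mathrm{rk}(Y)}$, not $2^{-|X||Y|}$. Low-rank configurations escape almost all of the cost, and whether their scarcity compensates for this is exactly the trade-off you leave unquantified.

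\textbf{How the paper does it.} The paper avoids both problems.
\begin{itemize}
\item It takes the $T_s$-free digraph $D=D(s-2,2)$ on $N\approx 2^{2s-3}$ vertices. The vertices are ordered pairs $(\mathbf{x},\mathbf{y})$ of non-orthogonal vectors of $\mathbb{F}_2^{s-1}$, with an arc to $(\mathbf{x}',\mathbf{y}')$ iff $\langle\mathbf{x},\mathbf{y}'\rangle=0$.
\item A random ordering of the vertices gives a graph that is $K_s$-free deterministically (Lemma~\ref{lem:Ts-free-Ks-free}). No budget is spent on cliques.
\item Forward-independent $k$-tuples are $2k$-tuples with $\langle\mathbf{a}_j,\mathbf{b}_i\rangle=1$ for all $j\le i$. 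They are counted by the rank sequence $r_i$ of $\mathbf{a}_1,\dots,\mathbf{a}_i$. A step that does not raise the rank allows only $2^{r_i}$ choices of $\mathbf{a}_i$, while $\mathbf{b}_i$ always has at most $2^{s-1-r_i}$ choices. Low rank is paid for on the $\mathbf{a}$-side and high rank on the $\mathbf{b}$-side. This is precisely the balancing your rank lemma lacks.
\item The count is $\fwi{k}(D)\le\sum_{t}\binom{k}{t}2^{(s-1)(t+k)-\binom{t+1}{2}}\le 2^{sk+s^2/2}$.
\item Sampling with $p=i_k^{-1/k}$ (Lemma~\ref{lem:sampling-Ks-free}) gives $r(s,Cs)\gtrsim 2^{2s}\cdot 2^{-s-s/(2C)}$, up to a factor polynomial in $s$.
\end{itemize}
So the exponent $1-\frac{1}{2C}$ comes from $N\approx 4^{s}$ together with the per-vertex cost $2^{s+s^2/(2k)}$, not from an optimization over blob sizes.
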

Writing $C = 1 + \alpha$ and considering $\alpha \searrow 0$, Theorem~\ref{thm:k-Ck} yields $r(s, Cs) \ge (p_C^{-1/2} + \eps)^s$, with $\eps = \Omega(\alpha)$, whereas~\cite{ma-shen-xie},~\cite{hunter-milojevic-sudakov}~and~\cite{lin-niu} all achieve the weaker $\eps = \Omega(\alpha^2)$.

Moving even closer to the diagonal, for $a = o(s)$, a simple, though slightly tedious, application of the local lemma, the optimality of which we demonstrate in the appendix, shows that 
\begin{equation} \label{eq:spencer-close-to-diag}
    r(s, s+a) \ge (1 + o(1)) \frac{s}{e} \cdot 2^{(s+a/2+1)/2 + O(a^2/s)}.
\end{equation}

Ma, Shen and Xie~\cite{ma-shen-xie} obtain an improvement over~\eqref{eq:spencer-close-to-diag} for $a = \omega(\sqrt{s})$. We are able to improve these lower bounds as soon as $a \ge 5$.
\begin{theorem} \label{thm:close}
    Let $s \rightarrow \infty$ and let $a$ be a nonnegative integer such that $a = o(s)$. Then, 
    \[ r(s, s+a) \ge (1 + o(1)) \frac{s}{e}\cdot 2^{(s+a-1)/2 - a^2 / (2s)}. \]
\end{theorem}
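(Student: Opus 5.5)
The plan is to beat the local-lemma bound \eqref{eq:spencer-close-to-diag} by replacing $G(n,p)$ with a random graph whose edges are correlated in a structured, algebraic way, and then to run the same local-lemma bookkeeping on this model. The mechanism should push the probability of an independent $(s+a)$-set below the independent-edges value $(1-p)^{\binom{s+a}{2}}$ while keeping the probability of an $s$-clique essentially $p^{\binom{s}{2}}$. Concretely, I would take vertices to be (a random subset of) vectors $\bx\in\bF_2^d$, draw a uniformly random quadratic form $Q$ on $\bF_2^d$, and let $\bx\bx'$ be an edge iff $Q(\bx+\bx')=1$, possibly OR-ed with an independent $p$-random layer so that the edge density can be tuned. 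This is the same type of construction the paper uses for its other bounds. Edge events then behave like independent fair coins on sets in general linear position, and deviate from independence only on linearly dependent configurations. The key point is that a large independent set is rigid: $Q(\bx_i+\bx_j)=0$ for all pairs is a system of many conditions on the coefficients of $Q$. When the set spans a space of dimension smaller than its size, the dependencies make the event strictly less likely than $2^{-\binom{s+a}{2}}$. A clique, having fewer vertices, should be comparatively unaffected.

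The key steps, in order, are as follows.
\begin{enumerate}
\item Compute, for a fixed $t$-set $T$ of vectors with affine span of dimension $r$, the exact probability that $T$ is a clique, respectively independent, under the random form. This reduces to counting solutions of a linear system in the coefficients of $Q$, so the probability is $2^{-\mathrm{rank}}$ with the rank determined by $r$ and $t$.
\item Bound the number of $t$-sets of each span dimension, and sum to get the expected numbers of $K_s$'s and of independent $(s+a)$-sets as functions of $n=2^{d}$ (or of the density of the random subset).
\item Set up a dependency graph in which two bad events are adjacent when their vertex sets share a pair, exactly as in Spencer's argument, and apply the asymmetric local lemma. The correlations created by $Q$ on dependent configurations must be absorbed, e.g.\ by first deleting or conditioning away the small number of degenerate configurations, or by working with a random subset in general position.
\item Optimise the parameters. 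With $k=s+a$, the local-lemma condition becomes $n\cdot 2^{-(\cdot)}$ type inequalities whose solution should be
\[
n=(1+o(1))\tfrac{s}{e}\,2^{(s+a-1)/2-a^2/(2s)} .
\]
The $-a^2/(2s)$ term comes from the second-order expansion of the optimal $p$ around $1/2$, as in the derivation of \eqref{eq:spencer-close-to-diag}.
\end{enumerate}

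The main obstacle is step~3. The local lemma needs near-independence between events on overlapping vertex sets, and the algebraic model introduces long-range correlations among vertex sets that are linearly dependent even when they share no pair. I would first try to restrict to a random subset of $\bF_2^d$ of density chosen so that, with high probability, every set of at most $s+a$ vertices is in general position apart from a controlled family. On sets in general position the edge indicators are exactly independent fair coins, so the dependency graph is the standard one. The leftover degenerate family would then be handled by a first-moment deletion, which costs only a $(1+o(1))$ factor. If this fails, the fallback is to use the algebraic graph only as a correlated layer superimposed on $G(n,p)$ and to repeat Spencer's computation, losing a constant in the exponent of the improvement.
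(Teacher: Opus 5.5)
There is a genuine gap, and it lies in the mechanism itself, not only in the bookkeeping of step~3.

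\textbf{The monotonicity goes the wrong way.} Under a uniformly random quadratic form $Q$, the event that a $t$-set $T$ is independent is the homogeneous linear system $Q(\bx+\bx')=0$ over all pairs of $T$. Its probability is exactly $2^{-\mathrm{rank}}$, where the rank is at most $\binom{t}{2}$ and also at most $\binom{r+1}{2}$, with $r$ the affine dimension of $T$. So linear dependencies make independent sets \emph{more} likely than $2^{-\binom{t}{2}}$, never less. With the OR-ed layer of density $p$ they are more likely than in $G(n,p')$, $p'=(1+p)/2$. Only the inhomogeneous clique systems can become inconsistent. On all of $\bF_2^d$ without the extra layer this is fatal: every quadratic form over $\bF_2$ vanishes on a subspace of dimension at least $d/2-2$, which gives independent sets of size at least $\sqrt{n}/4\gg s$.

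\textbf{The repair in step~3 reduces you to Spencer.} It removes exactly the structure that was supposed to help. On affinely independent sets the edge indicators are i.i.d. Subsets of affinely independent sets are affinely independent, so after restricting to general position every event you keep, cliques and independent sets alike, has exactly the marginals of $G(n,p')$. The local lemma then returns at most Spencer's $\frac{s}{e}2^{(s+1)/2+a/4+O(a^2/s)}$; the appendix shows this is the limit of that method. This is below the target once $a\ge 5$. Step~4 only asserts the target formula. As long as $K_s$'s and independent $(s+a)$-sets both occur with $G(n,p')$-type probabilities and must be balanced through the density, you gain $a/4$ rather than $a/2$ in the exponent. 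Also, the $-a^2/(2s)$ term does not come from expanding an optimal $p$ around $1/2$.

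\textbf{The missing idea.} Make the clique constraint vanish deterministically, so that only independent sets need controlling and a plain first-moment deletion suffices, with no local lemma. The paper does this as follows.
\begin{itemize}
\item It takes $D=D(s-2,2)$, whose $N\approx 2^{2s-3}$ vertices are the pairs $(\bx,\by)$ of nonzero vectors of $\bF_2^{s-1}$ with $\langle\bx,\by\rangle=1$. There is an arc $(\bx,\by)\to(\bx',\by')$ iff $\langle\bx,\by'\rangle=0$.
\item The triangular linear-independence argument of Lemma~\ref{lem:Hs-free} shows $D$ is $T_s$-free. Keeping the forward arcs along a uniformly random vertex ordering therefore gives a $K_s$-free graph with $\E\, i_k=\fwi{k}(D)/k!$.
\item Forward independent $k$-tuples are exactly the tuples $(\ba_1,\bb_1,\dots,\ba_k,\bb_k)$ with $\langle\ba_j,\bb_i\rangle=1$ for all $j\le i$. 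Count them by the rank sequence $r_i$ of $\ba_1,\dots,\ba_i$: a rank increase allows at most $2^{s-1}$ choices of $\ba_i$, and $\bb_i$ has at most $2^{s-1-r_i}$ choices. This gives $\fwi{k}(D)\le\sum_{t=1}^{s-1}\binom{k}{t}2^{(s-1)(t+k)-\binom{t+1}{2}}\le 2^{\frac32 s^2+as-\frac52 s+o(s)}$ for $k=s+a$.
\item Sample vertices at rate $i_k^{-1/k}$ and delete one vertex from each surviving independent $k$-set. This yields $r(s,s+a)\ge N\cdot\frac{k}{e}\,\fwi{k}(D)^{-1/k}-1$.
\end{itemize}
In this bound the factor $\frac{s}{e}$ comes from the $1/k!$ of the random ordering. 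The $-a^2/(2s)$ is the $k$-th root of the factor $2^{a^2/2+o(s)}$ by which this count exceeds the $G(N,1/2)$ benchmark $N^k2^{-\binom{k}{2}}$.
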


For positive integers $s$ and $\ell$, the multicolor Ramsey number $r(s; \ell)$ is the minimum integer $n$ such that any \mbox{$\ell$-coloring} of the edges of $K_n$ contains a monochromatic clique of size $s$. Using a construction based on polarity graphs, which also play a central role in the present paper, Conlon and Ferber~\cite{conlon-ferber} were first to obtain an exponential improvement over the old lower bound obtained using a random graph and a simple product argument of Abbott~\cite{abbott}. Their bound has subsequently been improved by Wigderson~\cite{wigderson}, Sawin~\cite{sawin}, and more recently by Campos and Pohoata~\cite{campos-pohoata}, and Attwa, Vidal and P. Morris~\cite{attwa-vidal-morris}, independently. The current best lower bounds on $r(s; \ell)$, given by~\cite{campos-pohoata} and~\cite{attwa-vidal-morris}, are less than $2^{0.4(\ell-2) s + s/2}$. We obtain the following improvement.

\begin{theorem} \label{thm:multicolor}
    For any fixed $\ell \ge 3$,
    \[ r(s; \ell) = \Omega(2^{(\ell-1) s / 2}). \]
\end{theorem}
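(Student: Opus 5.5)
I would prove Theorem~\ref{thm:multicolor} with an algebraic ``polarity-type'' colouring over $\bF_2$, followed by a random restriction of the vertex set. This is in the spirit of Conlon and Ferber~\cite{conlon-ferber}. The aim is to show that cliques of large rank are killed by the algebra, while cliques of small rank are rare enough to be destroyed by sampling.

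\textbf{Construction.} Fix $m$ to be chosen, and take the ambient space $\bF_2^m$. Choose $\ell-1$ symmetric bilinear forms $B_1,\dots,B_{\ell-1}$ on $\bF_2^m$, for instance independent uniformly random ones. Colour a pair $\{x,y\}$ by the vector $(B_1(x,y),\dots,B_{\ell-1}(x,y)) \in \bF_2^{\ell-1}$. Then partition $\bF_2^{\ell-1}$ into $\ell$ classes $P_1,\dots,P_\ell$, and give the edge colour $j$ if its vector lies in $P_j$. I would choose the partition so that each class is an affine-type set that forces many linear constraints. Concretely, colour $j\le \ell-1$ could be ``$B_1=\dots=B_{j-1}=0,\ B_j=1$'', and colour $\ell$ could be ``all $B_i=0$''. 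Possibly this should be replaced by a partition that is more balanced in the number of constraints it imposes, chosen to make the exponent come out to $(\ell-1)s/2$. Finally, pass to a uniformly random subset $V\subseteq \bF_2^m$ in which each vector is kept independently with probability $p$. The final vertex count is $N=p2^m$, and the task is to tune $m$ and $p$ so that $N=\Omega(2^{(\ell-1)s/2})$.

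\textbf{Counting monochromatic cliques by rank.} Let $S$ be an $s$-set of vectors whose span has dimension $r$. Restricted to $\mathrm{span}(S)$, each $B_i$ is a uniformly random symmetric form determined by $r(r+1)/2$ bits. Being monochromatic in colour $j$ forces the Gram matrices of the relevant $B_i$ on $S$ to have prescribed off-diagonal entries. When $r=s$ these entries are independent, so the probability is $2^{-c_j\binom{s}{2}}$, where $c_j\ge 1$ is the number of forms constrained. When $r<s$, the Gram matrix on $S$ has rank at most $r$. A constant or zero off-diagonal pattern on an $s\times s$ matrix then forces rank at least about $s-1$ over $\bF_2$, so low-rank monochromatic sets are algebraically impossible unless $r \gtrsim s/2$. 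More precisely, for each $i$ the all-zero off-diagonal pattern needs the diagonal to supply rank, and the all-ones pattern $J-I+D$ has rank at least $s-1$. The number of such sets is at most $2^{mr}\binom{2^r}{s}\le 2^{mr+rs}$, and each survives sampling with probability $p^s$.

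\textbf{Summing and choosing parameters.} Write the expected number of surviving monochromatic $K_s$ as
\[ \sum_j\sum_{r} 2^{mr+rs}p^s\,\Pr[\text{$S$ of rank $r$ is monochromatic in colour $j$}]. \]
I would then set $m$ and $p$ so that this sum is less than $N/2$, and delete one vertex from each bad clique. The hoped-for optimum has the dominant term at $r$ near $s$, with constraint count $c_j\approx \ell-1$ on average. Balancing $2^{ms}p^s 2^{-(\ell-1)s^2/2}$ against $p2^m$ gives $N\approx 2^{(\ell-1)s/2}$.

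\textbf{Main obstacle.} The hardest step is the colour classes with few constraints, such as colour $1$ above, which imposes only one form. Their probability is only $2^{-\binom s2}$, so by themselves they would cap $N$ at about $2^{s/2}$. I would first try a different labelling: colour each pair by an element of $\bF_2^{\ell-1}$ through a map under which every colour class still forces $\ell-1$ independent linear relations among Gram entries. One option is to take $\ell$ colours given by $\ell$ linear functionals in general position, with colour $j$ meaning ``the forms evaluate to the vector $v_j$'' for distinct $v_j$, and to handle pairs whose vector is not one of the $v_j$ by a secondary recursive colouring. Making this recursion lose only constant factors, while also controlling the rank-deficient sets $r<s$ uniformly, is where I expect the real work to be.
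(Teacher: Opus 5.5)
There is a genuine gap, and it goes beyond the repair your last paragraph leaves unfinished. Colouring by values of random symmetric forms cannot beat a uniformly random colouring, whatever partition you choose, including any recursive refinement by further random forms.

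Take independent uniform symmetric forms and a linearly independent $s$-set $\{x_1,\dots,x_s\}$. The entries $B_i(x_a,x_b)$, $a<b$, are then independent uniform bits. So on that set your colouring is exactly a random colouring in which colour $j$ appears with probability $\rho_j=|P_j|/2^{\ell-1}$. Since $\sum_j\rho_j=1$, convexity gives $\sum_j\rho_j^{\binom s2}\ge\ell^{1-\binom s2}$. Hence the full-rank part of your sum is, up to a constant factor, at least $\frac{N^s}{s!}\ell^{1-\binom s2}$; this is the part you expect to dominate, and it presupposes $m\ge s$. Keeping it below $N/2$ caps $N$ at about $s\,\ell^{s/2}$, far below $2^{(\ell-1)s/2}$ for $\ell\ge 3$.

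Your colour-$1$ computation is just the case $\rho_1=1/2$. Asking every class to force $\ell-1$ relations means asking that all $\rho_j\approx 2^{-(\ell-1)}$. That is impossible, since $\ell\,2^{-(\ell-1)}<1$.

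Your rank claims do not exclude low-rank cliques either. Over $\bF_2$, $J-I+D$ with $D=I$ equals $J$, which has rank $1$, and a diagonal Gram matrix can vanish. For instance, let $W$ be totally isotropic for $B_1$, with $B_1(e,W)=0$ and $B_1(e,e)=1$. Then $\{e+w : w\in W\}$ is a clique in colour $1$ of size $2^{\dim W}$.

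The missing idea is that $\ell-1$ of the colours must be dense yet contain no $K_s$ at all, so that only one colour ever has to be counted. This needs asymmetry, which symmetric forms cannot provide.

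The paper uses the skew configuration of Lemma~\ref{lem:Hs-free}, which makes the loopless digraph $D=D(s-2,2)$ on $N=(1+o(1))2^{2s-3}$ vertices $T_s$-free. It takes independent uniform maps $\phi_c\colon[n]\to V(D)$ for $c\in[\ell-1]$. Each pair $ij$ with $i<j$ gets the least $c$ such that $(\phi_c(i),\phi_c(j))\in A(D)$, or colour $\ell$ if there is no such $c$. The order on $[n]$ turns a $K_s$ in colour $c<\ell$ into a copy of $T_s$ in $D$. So these colours are $K_s$-free, even though their densities are roughly $1/2,1/4,\dots$.

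A $K_s$ in colour $\ell$ needs a forward independent $s$-tuple in every coordinate. So the expected number of such cliques is at most $\binom ns\bigl(\fwi{s}(D)/N^s\bigr)^{\ell-1}$. Lemma~\ref{lem:F2-fw-indep-sets} gives $\fwi{s}(D)\le 2^{\frac32 s^2-\frac52 s+o(s)}$ by counting bad tuples according to their rank sequence. That is how rank deficiency is handled: by counting, not by exclusion. Taking $n=2^{(s/2-4)(\ell-1)}$ makes this expectation less than $1$, with no vertex sampling needed.
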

While this presents an exponential improvement for any fixed $\ell$, for large $\ell$ it is still far from the best known upper bound $r(s; \ell) \le e^{-\delta s} \ell^{\ell s}$, where $\delta = \delta(\ell) > 0$, obtained by Balister, Bollob\'{a}s, Campos, Griffiths, Hurley, R. Morris, Sahasrabudhe and Tiba~\cite{balister}.

Our construction draws inspiration from several sources including some of the recent works on $r(3, k)$ and $r(4, k)$. The starting point of our proof stems from ideas based on pseudorandom graphs. A particularly convenient notion of pseudorandomness is that of so-called $(n, d, \lambda)$-graphs which are $d$-regular, $n$-vertex graphs whose adjacency matrices have all but the largest eigenvalue at most $\lambda$ in absolute value. This notion has first been systematically used by Alon and there has been a great deal of work utilising these graphs and studying their properties. We refer the interested reader to the excellent survey of Krivelevich and Sudakov~\cite{pseudorandomgraphssurvey} for an overview of the many results regarding pseudorandom graphs.

Alon and R\"{o}dl~\cite{alon-rodl} used $(n,d,\lambda)$-graphs to lower bound certain multicolor Ramsey numbers. These ideas were built upon by Mubayi and Verstra\"{e}te~\cite{mubayi-verstraete} who proved that the existence of dense pseudorandom $K_s$-free graphs implies lower bounds for the off-diagonal Ramsey numbers $r(s, k)$. In particular, if there exists $K_s$-free $(n, d, \lambda)$-graphs with $d = \Omega(n^{1 - 1 / (2s-3)})$ and $\lambda = O(\sqrt{d})$, which would be optimal, then $r(s, k) \ge k^{s-1 + o(1)}$. The idea is to show that randomly sampling vertices of a dense pseudorandom $K_s$-free graph with high probability produces a graph without large independent sets which in turn, translates to a lower bound on $r(s, k)$. Due to our limited knowledge of dense pseudorandom $K_s$-free graphs, which we shall discuss below, this unfortunately has not directly produced improved lower bounds for off-diagonal Ramsey numbers. Nonetheless, it gave improvements to a related problem of so called odd cycle-complete Ramsey numbers~\cite{mubayi-verstraete, CMMV-zarankiewicz}, and more importantly, it served as an inspiration for the recent resolution of the exponent of $r(4, k)$, which we discuss next.

In a major breakthrough, Mattheus and Verstraete~\cite{mattheus-verstraete} proved that $r(4,k) = \Omega(k^3 / (\log k)^4)$ which, by~\eqref{eq:old-bounds}, is tight up to a factor of $O(\log k)^2$. Their construction uses so-called Hermitian unitals from finite geometry to produce an algebraically defined graph which can be randomly modified to obtain a $K_4$-free graph with a nice edge distribution. While this graph is not spectrally pseudorandom, its independent sets can be efficiently counted using the so-called container method; see~\cite{samotij-survey} for an introduction to this powerful technique. Having a good upper bound on the number of large independent sets, as in~\cite{mubayi-verstraete}, taking a random subset of vertices yields the lower bound on $r(4, k)$. This proof strategy has been codified by Conlon, Mattheus, Mubayi and Verstra\"{e}te~\cite{CMMV-zarankiewicz}, the upshot of which is that if some ``natural conjectures'' on a certain extremal problem are true, then $r(s, k) = k^{s-1 + o(1)}$. An outline of the proof of the lower bound on $r(4, k)$ as well as a survey of recent results utilizing the pseudorandom graph framework was recently presented by Verstr\"{a}ete~\cite{verstraete-survey}.

As we shall see, our approach has a similar flavour -- we start with an algebraically defined construction, which can be thought of, and in fact easily transformed into, a $K_s$-free graph. This graph is not spectrally pseudorandom but its independent sets can be counted efficiently using the container method, closely following the argument of Alon and R\"{o}dl~\cite{alon-rodl}. Again, taking a random subset of vertices yields our lower bound on $r(s, k)$.

Next, we briefly discuss the case $s=3$, while for a much more detailed history we refer to~\cite{CJMSk3}. As mentioned, the best known upper bound on $r(3, k)$ is due to Shearer~\cite{shearer} and is the right hand side of~\eqref{eq:old-bounds}. The first lower bound determining the order of magnitude of $r(3, k)$ is due to Kim~\cite{kim}. The leading constant has been improved by Fiz Pontiveros, Griffiths and R. Morris~\cite{fizpontiveros} and, independently, by Bohman and Keevash~\cite{bohman-keevash-K3} using the triangle-free process. This lower bound was recently improved by Campos, Jenssen, Michelen and Sahasrabudhe~\cite{CJMSk3}. Drawing some high level ideas from their work, Hefty, Horn, King and Pfender~\cite{heftyhorn} found a beautiful construction that narrows the gap between the upper and lower bound to a factor of only $2 + o(1)$. The construction in~\cite{heftyhorn} is based on a certain graph product partly inspiring the present construction, which, as we shall see, has a product-like flavor.

Let us address the final ingredient in our proof. Our construction is based on the polarity graphs of projective spaces. These were shown by Alon and Krivelevich~\cite{alon-krivelevich} to provide instances of $K_s$-free $(n, d, \lambda)$-graphs with $d = \Theta(n^{1 - 1/(s-2)})$ and $\lambda = O(\sqrt{d})$. It is worth mentioning that Bishnoi, Ihringer and Pepe~\cite{bishnoiihringer} (see also Mattheus and Pavese~\cite{mattheus-pavese}) used a particular polarity to construct $K_s$-free graphs with a larger density, namely $d = \Theta(n^{1 - 1 / (s-1)})$ and also optimal spectral pseudorandomness. However, we do not actually use that the polarity graphs are $K_s$-free, but rather that they avoid a certain bipartite configuration.

\textbf{Declaration on the use of AI.} As mentioned, the argument that improved the exponent from $s-2$ to $s-1$ in Theorem~\ref{thm:main} was found by an internal model at OpenAI and communicated to the author. All the remaining ideas and proofs were derived with no significant use of AI tools. The computation in the appendix was derived using Claude. The paper is written entirely by the author.

\textbf{Organisation.} The rest of this paper is structured as follows. Section~\ref{sec:main} contains the proof of our main results and it is split into five subsections. First, in Subsection~\ref{subsec:Gtq} we introduce the polarity graphs from~\cite{alon-krivelevich} which serve as the basis of our construction and (re)prove the properties we need. In Subsection~\ref{subsec:sketch}, we sketch a proof of the weaker bound $r(s, k) \ge k^{s-2 + o(1)}$ given this construction. Subsection~\ref{subsec:prelims} introduces a bit of notation and contains several preliminary lemmas. In Subsection~\ref{subsec:old}, we present a proof of the bound $r(s, k) \ge k^{s-2 + o(1)}$ as well as a proof of Theorem~\ref{thm:off-diagonal-general}. In Subsection~\ref{subsec:s-1}, we prove Theorem~\ref{thm:main}. We present the proofs of Theorems~\ref{thm:k-Ck},~\ref{thm:close}~and~\ref{thm:multicolor} in Section~\ref{sec:close}. Finally, we end with some concluding remarks in Section~\ref{sec:concluding}.

\textbf{Notation.} We use mostly standard graph theoretic notation. Some of the graphs we consider have loops, at most one per vertex. In the adjacency matrix of a graph, a loop is represented by a $1$ as the corresponding diagonal entry and, accordingly, each loop contributes one to the degree of the vertex incident to it. For a graph $G$ and a vertex $v$ of $G$, we write $N_G(v)$ for the set of neighbours of $v$ in $G$, where $v \in N_G(v)$ if $G$ has a loop at $v$. For two vertex sets $A$ and $B$, we write $e_G(A, B) = \{ (a, b) \mid a \in A, b \in B, ab \in E(G) \}.$ For a directed graph, or digraph for short, $D$, we use $A(D)$ to denote its set of arcs. We use $\log x$ to denote the natural logarithm of $x$. We omit the use of floor and ceiling signs whenever they are not crucial to the argument.

\section{Setup and proofs for off-diagonal Ramsey numbers} \label{sec:main}

\subsection{The polarity graphs} \label{subsec:Gtq}
The basis of our construction is a construction of optimally pseudorandom $K_s$-free graphs due to Alon and Krivelevich~\cite{alon-krivelevich} which we now describe. Let $t \ge 2$ and let $q$ be an arbitrary prime power. Let $PG(t, q)$ denote the finite geometry of dimension $t$ over the field $\bF_q$. The points of $PG(t, q)$ may be represented by the $1$-dimensional subspaces, or $1$-spaces for short, of $\bF_q^{t+1}$, or equivalently, by equivalence classes of nonzero vectors $(x_1, \dots, x_{t+1})$ of length $t+1$ over $\bF_q$ under the equivalence relation where two vectors are equivalent if one is a multiple of the other by a nonzero element of $\bF_q$. Let $G(t, q)$ be the graph with vertex set $PG(t, q)$ where two (not necessarily distinct) vertices represented by vectors $\bx$ and $\by$ are joined by an edge if and only if $\langle \bx, \by \rangle = \sum_{i=1}^{t+1} x_i y_i = 0$, i.e. if the corresponding $1$-spaces are orthogonal. Note that whether $\langle \bx, \by \rangle = 0$ does not depend on the choice of the representatives, so the graph is well defined. Let us point out that $G(t, q)$ contains a loop at a vertex represented by $\bx$ if and only if $\langle \bx, \bx \rangle = 0$.

A graph $G$, which might contain loops, is said to be an $(n, d, \lambda)$-graph if it has $n$ vertices, it is $d$-regular, and all but the largest eigenvalue (which equals $d$) of its adjacency matrix are at most $\lambda$ in absolute value.

The following properties of $G(t, q)$ are shown in~\cite{alon-krivelevich} and we include a proof for completeness. The proof in~\cite{alon-krivelevich} assumes that $t$ is fixed and $q$ tends to infinity, while we shall only assume the latter. However, allowing $t$ to grow with $q$ makes no difference to the proof.

\begin{lemma}[\cite{alon-krivelevich}] \label{lem:properties}
    For $t \ge 2$, the graph $G(t, q)$ is an $(n, d, \lambda)$-graph, with $n = q^t (1 + o(1))$, $d = q^{t-1}(1 + o(1))$ and $\lambda = (1+o(1)) \sqrt{d}$, where the asymptotic notation is for $q$ tending to infinity.
\end{lemma}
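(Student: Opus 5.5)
The plan is to compute everything directly from the linear algebra of $\bF_q^{t+1}$, using the square of the adjacency matrix. First I count vertices and degrees. The number of points of $PG(t,q)$ is $n = \frac{q^{t+1}-1}{q-1} = q^t + q^{t-1} + \dots + 1 = q^t(1+o(1))$. For a fixed point represented by $\bx \neq 0$, its neighbours, loop included, are the $1$-spaces inside the hyperplane $\bx^\perp = \{\by : \langle \bx,\by\rangle = 0\}$. This hyperplane has dimension $t$, because the form $\langle\cdot,\cdot\rangle$ is nondegenerate. Hence every vertex has degree $d = \frac{q^t-1}{q-1} = q^{t-1}(1+o(1))$, and $G(t,q)$ is $d$-regular. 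The convention that a loop adds one to the degree is exactly what makes this count uniform.

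Next I compute $A^2$, where $A$ is the adjacency matrix with loops on the diagonal. The entry $(A^2)_{\bx\by}$ is the number of points $\bz$ orthogonal to both $\bx$ and $\by$, that is, the number of $1$-spaces in $\bx^\perp \cap \by^\perp$. If $\bx = \by$ as points, this is $d$. If $\bx \neq \by$, the vectors are linearly independent, so by nondegeneracy $\bx^\perp\cap\by^\perp$ has dimension $t-1$. The entry is then $\mu := \frac{q^{t-1}-1}{q-1}$, independent of the pair. Thus
\[ A^2 = (d-\mu) I + \mu J. \]
This is the key identity, and it is also where the hypothesis $t\ge 2$ enters, since it guarantees $t-1\ge 1$. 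Nothing here depends on $t$ being fixed.

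Finally I read off the spectrum. $A$ is symmetric, so it has an orthonormal eigenbasis. The all-ones vector is an eigenvector with eigenvalue $d$, and every other eigenvector $v$ can be taken orthogonal to $\mathbf{1}$. For such $v$ we have $Jv = 0$, so $A^2 v = (d-\mu)v$ and the eigenvalue $\lambda_v$ satisfies $\lambda_v^2 = d - \mu = q^{t-1}$. Therefore $|\lambda_v| = q^{(t-1)/2} = (1+o(1))\sqrt{d}$, because $d = q^{t-1}(1 + O(1/q))$ uniformly in $t$. Nothing is really hard here. The only points needing care are the loop convention in the diagonal of $A^2$ and the remark that the $o(1)$ terms are uniform in $t$: they are all bounded by geometric sums of the form $\sum_{j\ge1} q^{-j} = O(1/q)$, so letting $t$ grow with $q$ causes no problem.
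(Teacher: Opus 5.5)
Your proposal is correct and is essentially the paper's proof: you count points and hyperplanes to get $n$ and $d$, use that two distinct points have exactly $(q^{t-1}-1)/(q-1)$ common neighbours to obtain $A^2 = \mu J_n + (d-\mu) I_n$, and read off $|\lambda| = \sqrt{d-\mu}$ for every eigenvector orthogonal to the all-ones vector. Your explicit evaluation $d-\mu = q^{t-1}$ and your remark on uniformity in $t$ match the paper's comment that letting $t$ grow with $q$ changes nothing; note also that the identity already holds for $t=1$ with $\mu=0$, so this step is not really where $t\ge 2$ is needed.
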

\begin{proof}
    Each equivalence class corresponding to a point of $PG(t, q)$ has size $q-1$, so $G(t,q)$ has $n = (q^{t+1} - 1) / (q-1) = (1+o(1)) q^t$ vertices. For any nonzero $\bx \in \bF_q^{t+1},$ there are $q^t - 1$ nonzero solutions $\by$ to the equation $\langle \bx, \by \rangle = 0$, so $G(t,q)$ is $d$-regular with $d = (q^t - 1) / (q-1) = (1+o(1))q^{t-1}.$

    Let $\bx, \by \in \bF_q^{t+1}$ be linearly independent. Then, the number of nonzero vectors $\bz \in \bF_q^{t+1}$ such that $\langle \bx, \bz \rangle = \langle \by, \bz \rangle = 0$ equals $q^{t-1} - 1$, thus any pair of distinct vertices of $G(t,q)$ have $a \coloneqq (q^{t-1} - 1) / (q-1)$ common neighbours. Let $A$ denote the adjacency matrix of $G(t, q)$. Denoting by $J_n$ the $n \times n$ all-ones matrix and by $I_n$ the $n \times n$ identity matrix, the above facts imply that $A^2 = aJ_n + (d-a)I_n.$ Recalling that the principal eigenvector of the adjacency matrix of a $d$-regular graph is the all-ones vector and the other eigenvectors are orthogonal to it, it follows that the absolute value of all but the largest eigenvalue of $G(t,q)$ equals $\sqrt{d-a} = (1+o(1)) \sqrt{d}.$
\end{proof}

It was shown in~\cite{alon-krivelevich} that the subgraph of $G(t, q)$ induced on the set of non-self-orthogonal points is $K_{t+2}$-free. For our purposes, however, we need that $G(t, q)$ avoids a certain bipartite structure as proved in the lemma below. The proof is the skew version of the dimension argument exhibiting $K_{t+2}$-freeness.

\begin{lemma} \label{lem:Hs-free}
    There do not exist vertices $a_1, b_1, \dots, a_{t+2}, b_{t+2} \in V(G(t, q))$ such that $a_ib_j \in E(G(t, q))$ for all $i, j$ with $1 \le i < j \le t+2$ and $a_i b_i \not\in E(G(t, q))$ for all $i \in [t+2]$.
\end{lemma}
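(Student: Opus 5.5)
We argue by contradiction through a dimension count in $\bF_q^{t+1}$. Suppose such vertices exist, and fix nonzero representatives $\ba_1, \dots, \ba_{t+2}, \bb_1, \dots, \bb_{t+2} \in \bF_q^{t+1}$. The hypotheses translate into the bilinear conditions $\langle \ba_i, \bb_j \rangle = 0$ for all $i < j$ and $\langle \ba_i, \bb_i \rangle \neq 0$ for all $i$. Form the $(t+2) \times (t+2)$ matrix $M$ with entries $M_{ij} = \langle \ba_i, \bb_j \rangle$. By the conditions, $M$ is lower triangular with nonzero diagonal, so $\det M \neq 0$ and $M$ has rank $t+2$.

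On the other hand, $M = AB^{T}$, where $A$ is the $(t+2) \times (t+1)$ matrix whose rows are the $\ba_i$ and $B$ is the one whose rows are the $\bb_j$. Hence $\operatorname{rank} M \le t+1$, which is a contradiction.

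Equivalently, in the language of the usual dimension argument, we can show directly that $\ba_1, \dots, \ba_{t+2}$ are linearly independent, which is impossible in $\bF_q^{t+1}$. Suppose $\sum_i \lambda_i \ba_i = 0$ with some $\lambda_i \neq 0$, and let $j$ be the largest index with $\lambda_j \neq 0$. Pairing with $\bb_j$ kills every term with $i < j$ (since $\langle \ba_i, \bb_j \rangle = 0$) and every term with $i > j$ (since $\lambda_i = 0$). This leaves $\lambda_j \langle \ba_j, \bb_j \rangle = 0$, contradicting $\langle \ba_j, \bb_j \rangle \neq 0$.

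The one point to check is that loops do not interfere. The statement allows $a_i = b_j$ or self-orthogonal points, but the argument only uses the bilinear conditions, never distinctness, so no case analysis is needed. I expect no real obstacle: the argument is just the triangular-matrix rank bound above.
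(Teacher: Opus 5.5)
Your proof is correct and is essentially the paper's argument. The paper shows that $\by_1,\dots,\by_{t+2}$ (your $\bb_j$) are linearly independent by taking the minimal index with a nonzero coefficient and pairing with $\bx_i$; this mirrors your maximal-index argument for the $\ba_i$, and both unroll your observation that the lower-triangular matrix $AB^{T}$ with nonzero diagonal would need rank $t+2$ while factoring through $\bF_q^{t+1}$.
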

\begin{proof}
    Assume this is false, so there exist vectors $\bx_1, \by_1, \bx_2, \by_2, \dots, \bx_{t+2}, \by_{t+2} \in \bF_q^{t+1}$ such that $\langle \bx_i, \by_j \rangle = 0$ for all $i, j$ with $1 \le i < j \le t+2$ and $\langle \bx_i, \by_i \rangle \neq 0$ for all $i \in [t+2]$.
    
    We claim that the vectors $\by_1, \by_2, \dots, \by_{t+2}$ are linearly independent. Indeed, suppose there is a non-trivial linear combination $\sum_{j=1}^{t+2} \alpha_j \by_j = 0$ and let $i$ be the minimum index $j \in [t+2]$ such that $\alpha_j \neq 0$. We have
    \[ 0 = \sum_{j=1}^{t+2} \alpha_j \by_j = \sum_{j=i}^{t+2} \alpha_j \by_j. \]

    Taking the inner product with $\bx_i$ and recalling that $\langle \bx_i, \by_j \rangle = 0$ whenever $i < j \le t+2,$ it follows that
    \[ \alpha_i \langle \bx_i, \by_i \rangle = 0. \]
    Since $\langle \bx_i, \by_i \rangle \neq 0,$ it follows that $\alpha_i = 0$, contradicting our choice of $i$ and thus proving that the vectors $\by_1, \dots, \by_{t+2}$ are linearly independent. Since these are vectors in a space of dimension $t+1$, this is a contradiction.
\end{proof}

\subsection{Proof sketch} \label{subsec:sketch}
    Let us describe the main ideas behind the proof of Theorem~\ref{thm:main}. We first discuss the proof of the weaker bound $r(s, k) \ge k^{s-2+o(1)}$, where it is easier to see the main ideas and which motivates the proof of Theorem~\ref{thm:main}. We also remark that this construction is behind the proofs of all of our other results.
    
    Let $t = s-2$ and consider the graph $G = G(t, q)$ and recall that it is a $K_s$-free $(n, d, \lambda)$-graph with $n \sim q^{s-2}, d \sim n/ q \sim q^{s-3} \sim n^{1 - 1 / (s-2)}$ and $\lambda \sim \sqrt{d}$. Lemma~\ref{lem:Hs-free} tells us that $G$ also avoids a certain bipartite structure. This motivates a kind of product construction to obtain a $K_s$-free graph $\Gamma$ with roughly $n' = n^2$ vertices and degree $d' \sim n' / q$, which we describe shortly. Crucially, $\Gamma$ is sufficiently pseudorandom so that we can use the container method to count the number of independent sets of size $k = \Theta(n' (\log n')^2 / d')$. A standard sampling and deletion argument then yields the lower bound $r(s, k) \ge k^{s-2+o(1)}.$ We note that plugging in the graph $G$ into the argument of Mubayi and Verstraete~\cite{mubayi-verstraete} yields the lower bound $r(s, k) \ge k^{(s-1)/2 + o(1)}$. At a high level, the fact that the number of vertices goes from $n$ to $n^2$ in this product construction, corresponds to the improvement of about a factor of $2$ in the exponent in our lower bound for $r(s, k)$. We proceed with a more detailed outline of the proof, starting with the aforementioned product-like construction.

    The pseudorandom graph $\Gamma$ is constructed in two steps. First, define a directed graph $D$ with $V(D) = \{ (a, b) \mid ab \not\in E(G)\}$ and with a directed edge from $(a, b)$ to $(a', b')$ if and only if $(a, b') \in E(G)$. Observe that $D$ has $n' \sim n^2$ vertices and is $d'$-regular, where $d' \sim nd \sim n' / q$. Lemma~\ref{lem:Hs-free} implies that $D$ does not contain a copy of the transitive tournament on $s$ vertices. Now, take a random permutation $\pi$ of the vertices of $D$ and let $\Gamma$ be the undirected graph on the same vertex set obtained by keeping all edges going forward according to $\pi$. Since $D$ does not contain a copy of the transitive tournament of size $s$, it follows that $\Gamma$ is $K_s$-free with probability $1$. It remains to count the expected number of independent sets of size $k = \Theta(n' (\log n')^2 / d')$ in $\Gamma$. Note that an independent set of size $k$ in $\Gamma$ corresponds to a \emph{forward independent $k$-tuple} in $D$, defined as a $k$-tuple of vertices $(v_1, \dots, v_k) \in V(D)^k$ with no forward edges, i.e. such that $v_i v_j \not\in A(D),$ for any $i < j, i,j \in [k]$. Observe that each forward independent $k$-tuple in $D$ is increasing according to $\pi$ with probability $1/k!$. Thus, it suffices to bound the number forward independent $k$-tuples in $D$, or equivalently, $2k$-tuples $(a_1, b_1, \dots, a_k, b_k) \in V(G)^{2k}$ such that $a_i b_i \not\in E(G)$ for all $i \in [k]$ which we shall call \emph{coherent $2k$-tuples}. Indeed, note that for a forward independent $k$-tuple $((a_1, b_1), \dots, (a_k, b_k))$ it holds that $a_i b_i \not \in E(G)$, since $a_ib_i \in V(D)$, and for all $i < j,$ it holds that $a_i b_j \not\in E(G)$, since $((a_i, b_i), (a_j, b_j)) \not\in A(D)$. Thus, a forward independent $k$-tuple in $D$ corresponds to a coherent $2k$-tuple of vertices of $G$.

    Our argument for this follows that of Alon and R\"{o}dl~\cite{alon-rodl} for upper bounding the number of independent sets in $(n, d, \lambda)$-graphs. The expander mixing lemma shows that the largest independent set in an $(n, d, \lambda)$-graph has size at most $n \lambda / d$ and~\cite{alon-rodl} show that the number of independent sets of size $k = 100 n (\log n)^2 / d$, say, in an $(n, d, \lambda)$-graph is essentially upper bounded by the number of subsets of size $k$ of the largest possible independent set. In our setting, we show that the number of coherent $2k$-tuples is essentially upper bounded by the number of ways to choose a $k$-tuple of vertices from a set $A$ and a $k$-tuple of vertices from a set $B$, where there are no edges between $A$ and $B$ in $G$. The expander mixing lemma shows that $|A||B| \le \frac{\lambda^2 n^2}{d^2}$ if there are no edges between $A$ and $B$ which carries over to the fact that there are at most $O(\lambda^2 n^2 / d^2)^k$ coherent $2k$-tuples. Recalling that each of them yields an independent set in $\Gamma$ with probability $1/k! = \Theta(1/k)^k$, we see that the expected number of independent sets of size $k$ in $\Gamma$ is at most $O\left( \frac{\lambda^2 n^2}{d^2 k}\right)^k$. Hence, by sampling the vertices with probability $p = \Omega(\frac{d^2 k}{\lambda^2 n^2})$ and deleting any remaining independent $k$-set, we deduce that $r(s, k) \ge \Omega(p |V(D)|) = \Omega(d^2 k / \lambda^2) = k^{s-2+o(1)}$.
    
    The final modification to obtain Theorem~\ref{thm:main} is a slightly different product construction using the graphs $G(t, q)$, where, as before, we start by defining a directed graph. Crucially, we may take $t = s-1$ and ultimately produce a $K_s$-free graph, that is, we save one in the size of the largest clique compared to the approach described above. So, let $t=s-1, G = G(t, q)$ and let $D^* = D^*(t, q)$ be the digraph with $V(D^*) = \{ (a, b) \mid ab \in E(G)\}$ and a directed edge from $(a, b)$ to $(a', b')$ if and only if $ab' \in E(G),$ but $a' b \not\in E(G)$. Crucially, $D^*$ does not contain a copy of the transitive tournament of size $s$. See Figure~\ref{fig:configurations} for a comparison of the configurations in $G(t, q)$ that would yield a clique in the former and latter constructions. Denote $n = |V(G)| \sim q^{s-1}$ and observe that $D^*$ has $n_2 \sim n^2 / q \sim q^{2s-3}$ vertices and is $d_2$-regular, where $d_2 \sim n_2 / q \sim n_2^{1 - \frac{1}{2s-3}}$, which, unsurprisingly, matches the maximum possible density of an optimally pseudorandom $K_s$-free graph. As before, taking a random permutation of the vertices of $D^*$ and keeping all forward edges, we obtain a $K_s$-free graph $\Gamma^*$. While $\Gamma^*$ is not spectrally pseudorandom, we can count its independent sets, or rather, forward independent tuples in the digraph $D^*$ from which $\Gamma^*$ arises, using the container method. The analysis here is somewhat more involved but based on the same core ideas.
    
\subsection{Preliminaries} \label{subsec:prelims}
For a graph $G$ and a positive integer $k$, we denote by $i_k(G)$ the number of independent sets of size $k$ in $G$. 

We denote by $T_s$ the transitive tournament on $s$ vertices and say that a digraph is $T_s$-free if it does not contain a copy of $T_s$. For a digraph $D$, we say that a $k$-tuple of vertices $(v_1, v_2, \dots, v_k) \in V(D)^k$ is \emph{forward independent} if there are no $i, j \in [k], i < j$ such that $(v_i, v_j) \in A(D)$. We denote the number of forward independent $k$-tuples in a digraph $D$ by $\fwi{k}(D)$.

We start with two simple lemmas.
\begin{lemma} \label{lem:Ts-free-Ks-free}
    Suppose that $D$ is a $T_s$-free digraph on $n$ vertices. Then, for any $k \ge 1$, there exists a $K_s$-free graph $G$ on $n$ vertices satisfying $i_k(G) \le \left(\frac{e}{k} \right)^k \cdot \fwi{k}(D)$.
\end{lemma}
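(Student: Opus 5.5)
We use a random ordering and an averaging argument, as in the proof sketch of Subsection~\ref{subsec:sketch}. Let $\pi$ be a uniformly random bijection $V(D) \to [n]$. Define the undirected graph $G_\pi$ on $V(D)$ by putting $uv \in E(G_\pi)$ if and only if $(u,v) \in A(D)$ and $\pi(u) < \pi(v)$, or $(v,u) \in A(D)$ and $\pi(v) < \pi(u)$. In words, we keep exactly the arcs of $D$ that point forward in the order $\pi$ and forget their orientation. Loops of $D$, if any, are discarded, and so are pairs of opposite arcs, since only the forward one can survive.

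\textbf{$K_s$-freeness.} We show that $G_\pi$ is $K_s$-free for every $\pi$. Suppose $v_1, \dots, v_s$ span a clique in $G_\pi$, indexed so that $\pi(v_1) < \dots < \pi(v_s)$. For each $i < j$, the edge $v_iv_j$ was kept, which forces $(v_i, v_j) \in A(D)$. Hence $v_1, \dots, v_s$ span a copy of $T_s$ in $D$, contradicting the assumption that $D$ is $T_s$-free.

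\textbf{Counting independent sets.} Fix a $k$-set $S \subseteq V(D)$ and list it as $(v_1, \dots, v_k)$ in increasing $\pi$-order. If $S$ is independent in $G_\pi$, then no arc $(v_i, v_j)$ with $i < j$ lies in $A(D)$, since such an arc would have been kept. So this ordered tuple is forward independent. Conversely, let $(v_1, \dots, v_k)$ be any forward independent $k$-tuple of distinct vertices. The event that $\pi(v_1) < \dots < \pi(v_k)$ has probability exactly $1/k!$.

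Every independent $k$-set of $G_\pi$ arises from at least one such pair (a forward independent tuple together with the event that $\pi$ orders it increasingly). Summing over tuples therefore gives
\[ \E[i_k(G_\pi)] \le \frac{\fwi{k}(D)}{k!} \le \left(\frac{e}{k}\right)^k \fwi{k}(D), \]
where the second inequality uses $k! \ge (k/e)^k$. Tuples with repeated entries are also counted in $\fwi{k}(D)$; this only increases the right-hand side, so it does no harm.

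\textbf{Conclusion.} Some fixed $\pi$ satisfies $i_k(G_\pi) \le \E[i_k(G_\pi)]$. The corresponding $G_\pi$ is the required $K_s$-free graph on $n$ vertices. The only step that needs care is the correspondence between independent sets and forward independent tuples, namely checking that independence in $G_\pi$ forces forward independence of the $\pi$-sorted tuple. This holds by the definition of which edges are kept.
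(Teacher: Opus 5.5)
Your proposal is correct and follows essentially the same argument as the paper: keep the arcs of $D$ that point forward under a uniformly random ordering $\pi$, deduce $K_s$-freeness from $T_s$-freeness, and bound $\E[i_k]$ by $\fwi{k}(D)/k!$ using linearity of expectation. Writing $\le$ rather than the paper's $=$ is the more careful choice, since $\fwi{k}(D)$ also counts tuples with repeated entries.
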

\begin{proof}
    Let $\pi$ be a random permutation of $V(D)$ and let $G$ be the graph with vertex set $V(D)$ and edge set $\{ \{u, v\} \mid (u, v) \in A(D), \pi(u) < \pi(v) \}$, that is, $G$ contains an edge for each arc going forward according to $\pi$. Since $D$ is $T_s$-free, it follows that $G$ is $K_s$-free. Observe that every independent set of size $k$ corresponds to a distinct forward independent $k$-tuple in $D$, each of which is in an increasing order according to $\pi$ with probability $1/k!$. By linearity of expectation, $\E[i_k(G)] = \fwi{k}(D) / k! \le \left( \frac{e}{k}\right)^k \cdot \fwi{k}(D)$, implying the existence of the desired graph.
\end{proof}

The following lemma is well-known and implicitly proved in \cite{mubayi-verstraete}.
\begin{lemma} \label{lem:sampling-Ks-free}
    Suppose $G$ is a $K_s$-free graph on $n$ vertices and let $k \ge 1$ be arbitrary. If $p \in [0, 1]$ satisfies $p^{k} i_k(G) \le 1$, then, $r(s, k) > pn - 1.$
\end{lemma}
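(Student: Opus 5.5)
We use the standard sampling and deletion argument. Let $S$ be a random subset of $V(G)$ in which each vertex is included independently with probability $p$, and consider the induced subgraph $G[S]$. It is $K_s$-free, since it is an induced subgraph of the $K_s$-free graph $G$. From each independent set of size $k$ contained in $S$, delete one vertex. The resulting graph $G'$ is still $K_s$-free, being an induced subgraph of $G$. It also has no independent set of size $k$. Indeed, any such set in $G'$ would be an independent $k$-set of $G$ lying inside $S$, and we removed at least one vertex from every such set.

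Next we compute the expected size of $G'$. By linearity of expectation, $\E|S| = pn$. The expected number of independent $k$-sets of $G$ inside $S$ is $p^k i_k(G)$, since each fixed $k$-set lies in $S$ with probability $p^k$. Hence
\[ \E|V(G')| \ge pn - p^k i_k(G) \ge pn - 1, \]
where the last step uses the hypothesis $p^k i_k(G) \le 1$. Therefore some outcome gives a graph $G'$ with $|V(G')| \ge pn - 1$.

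Finally we convert this into the Ramsey bound. The graph $G'$ has neither a $K_s$ nor an independent set of size $k$, so $r(s,k) > |V(G')| \ge pn - 1$.

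The only point needing care is the strictness of the final inequality. Since $G'$ avoids both configurations, $r(s,k) \ge |V(G')| + 1$, which gives $r(s,k) > pn - 1$ directly. No step here is a real obstacle.
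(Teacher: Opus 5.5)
Your proposal is correct and follows essentially the same sampling-and-deletion argument as the paper. The only cosmetic difference is that the paper first picks an outcome with $|V(F)| - i_k(F) \ge pn - 1$ and deletes afterwards, while you delete first and then take expectations; the two are equivalent.
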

\begin{proof}
    Let $U \subseteq V(G)$ be a random set of vertices obtained by keeping each vertex independently with probability $p$ and let $G' = G[U]$ be the induced subgraph on $U$. Clearly, $G'$ is $K_s$-free and $\E[ |V(G')| - i_k(G')] = p n - p^k i_k(G) \ge pn - 1$. Thus, there exists a $K_s$-free graph $F$ with $|V(F)| - i_k(F) \ge pn - 1$. Removing an arbitrary vertex from each independent set of size $k$ in $F,$ we obtain a $K_s$-free graph with no independent set of size $k$ and at least $pn - 1$ vertices. Hence, $r(s, k) > pn - 1$, as claimed.
\end{proof}

We shall use the well-known expander mixing lemma. Though usually stated for a graph without loops, the same proof works if loops are allowed.
\begin{lemma}[e.g.~{\cite[Corollary~9.2.5]{alon-spencer}}] \label{lem:expander-mixing}
    Let $G$ be an $(n, d, \lambda)$-graph and let $A, B \subseteq V(G)$. Then,
    \[ \big| e_G(A, B) - \frac{d}{n} |A||B| \big| \le \lambda \sqrt{|A||B|} . \]
\end{lemma}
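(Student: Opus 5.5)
The plan is the standard spectral proof of the expander mixing lemma, written so that it visibly allows loops. Let $M$ be the adjacency matrix of $G$, with a $1$ on the diagonal at each looped vertex. $M$ is real symmetric and every row sum equals $d$, loops counted once, which is exactly the convention for degrees fixed in the Notation paragraph. Hence $\mathbf{1}$ is an eigenvector with eigenvalue $d$. Choose an orthonormal eigenbasis $u_1 = \mathbf{1}/\sqrt{n}, u_2, \dots, u_n$ with eigenvalues $\lambda_1 = d, \lambda_2, \dots, \lambda_n$. By the $(n,d,\lambda)$ assumption, $|\lambda_i| \le \lambda$ for all $i \ge 2$. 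I also need that $e_G(A,B) = \mathbf{1}_A^{T} M \mathbf{1}_B$ under the ordered-pair definition of $e_G(A,B)$. This holds even when $A \cap B \neq \emptyset$ or when loops are present: a loop at $v \in A \cap B$ contributes the single pair $(v,v)$, which matches the single diagonal entry. This bookkeeping is the only point where loops could matter, and it works out because both sides count ordered pairs.

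Next I expand the indicator vectors as $\mathbf{1}_A = \sum_i \alpha_i u_i$ and $\mathbf{1}_B = \sum_i \beta_i u_i$. Then $\alpha_1 = |A|/\sqrt{n}$, $\beta_1 = |B|/\sqrt{n}$, $\sum_i \alpha_i^2 = |A|$ and $\sum_i \beta_i^2 = |B|$. This gives
\[ e_G(A,B) = \sum_{i} \lambda_i \alpha_i \beta_i = \frac{d}{n}|A||B| + \sum_{i \ge 2} \lambda_i \alpha_i \beta_i. \]
For the error term, bound $\bigl|\sum_{i\ge 2}\lambda_i\alpha_i\beta_i\bigr| \le \lambda \sum_{i \ge 2}|\alpha_i||\beta_i|$ and apply Cauchy--Schwarz. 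The result is at most $\lambda\sqrt{\sum_{i\ge2}\alpha_i^2}\sqrt{\sum_{i\ge2}\beta_i^2} \le \lambda\sqrt{|A||B|}$, which is the claim. One could sharpen this to $\lambda\sqrt{|A|(1-|A|/n)|B|(1-|B|/n)}$, but that is not needed.

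None of the steps is a real obstacle. The one place needing care is confirming that the loop convention keeps $M$ symmetric with constant row sum $d$, so that $\mathbf{1}$ is the top eigenvector and the remaining eigenvectors are orthogonal to it. This is exactly what makes the decomposition above valid, and it is the same fact already used in the proof of Lemma~\ref{lem:properties}.
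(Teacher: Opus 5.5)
Your argument is correct and is the standard spectral proof of the expander mixing lemma. That is all the paper offers here: it cites Alon--Spencer and remarks that the usual proof goes through when loops are allowed. Your explicit checks are exactly the verification behind that remark: the loop convention keeps the adjacency matrix symmetric with all row sums $d$ (so $\mathbf{1}$ is the top eigenvector and $|\lambda_i|\le\lambda$ on $\mathbf{1}^{\perp}$), and $e_G(A,B)=\mathbf{1}_A^{T}M\mathbf{1}_B$ holds under the ordered-pair count.
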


A useful corollary of the expander mixing lemma is the following.
\begin{lemma}[{\cite[Lemma~2.2]{alon-rodl}}] \label{lem:BC}
    Let $G$ be an $(n, d, \lambda)$-graph and let $B \subseteq V(G)$ be an arbitrary subset of vertices of $G$. Define
    \[ A = \left\{ u \in V \mid |N_G(u) \cap B| \le \frac{d|B|}{2n} \right\}. \]
    Then,
    \[ |A| |B| \le \frac{4 \lambda^2}{d^2} n^2. \]
\end{lemma}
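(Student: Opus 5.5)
We would prove Lemma~\ref{lem:BC} directly from the expander mixing lemma (Lemma~\ref{lem:expander-mixing}) applied to the pair $(A, B)$. By the definition of $A$, every $u \in A$ has at most $\frac{d|B|}{2n}$ neighbours in $B$. Summing over $u \in A$ gives
\[ e_G(A, B) = \sum_{u \in A} |N_G(u) \cap B| \le \frac{d}{2n}|A||B|. \]
Loops cause no trouble here, since $e_G(A,B)$ counts ordered pairs and a loop at $u \in A \cap B$ is counted exactly once via $u \in N_G(u)$, consistent with the adjacency matrix convention.

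On the other hand, Lemma~\ref{lem:expander-mixing} gives the lower bound $e_G(A,B) \ge \frac{d}{n}|A||B| - \lambda\sqrt{|A||B|}$. Combining the two bounds yields
\[ \frac{d}{2n}|A||B| \le \lambda \sqrt{|A||B|}. \]
If $|A||B| = 0$ the claim is trivial. Otherwise we divide by $\sqrt{|A||B|}$ to get $\sqrt{|A||B|} \le \frac{2\lambda n}{d}$, and squaring gives $|A||B| \le \frac{4\lambda^2}{d^2} n^2$, as required.

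There is no real obstacle. The only point needing care is that the expander mixing lemma is used with loops allowed, which the paper has already noted is valid, and that $e_G(A,B)$ is counted consistently with the adjacency matrix, i.e.\ $e_G(A,B) = \mathbf{1}_A^{T} M \mathbf{1}_B$ where $M$ is the adjacency matrix.
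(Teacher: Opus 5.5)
Your proof is correct and is the intended argument. The paper gives no separate proof, presenting the statement as a direct corollary of the expander mixing lemma (Lemma~\ref{lem:expander-mixing}) via exactly your comparison of $|e_G(A,B)| \le \frac{d}{2n}|A||B|$ with the mixing-lemma lower bound, and it repeats the same computation for the poor sets $P_r$ in the proof of Lemma~\ref{lem:fw-ind-sets-s1}.
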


The following simple combinatorial lemma codifies our strategy for counting forward independent tuples in a digraph. We remark that in our application of Lemma~\ref{lem:tree-counting} the rooted tree $\cT$ will be the tree formed by forward independent tuples with the root being the empty tuple and the children of a forward independent $m$-tuple being the forward independent $(m+1)$-tuples obtained by appending a vertex.
\begin{lemma} \label{lem:tree-counting}
    Let $\cT$ be a (not necessarily finite) rooted tree such that every vertex has at most $\Delta$ children out of which at most $h$ are \emph{marked}. Furthermore, suppose that every path from the root has at most $w$ unmarked vertices. Then, for any $k \ge w$, the number of paths of length $k$ from the root is at most 
    \[ 2^k \Delta^w h^{k-w}. \] 
\end{lemma}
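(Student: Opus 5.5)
A path of length $k$ from the root is a sequence $v_0, v_1, \dots, v_k$ with $v_0$ the root and each $v_{i+1}$ a child of $v_i$. The plan is to encode each such path by two pieces of data: the set of positions where it passes through an unmarked vertex, and the local choice of child at each step. Then count the encodings.

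First I fix the marked/unmarked pattern. For a path $v_0,\dots,v_k$, let $S\subseteq[k]$ be the set of indices $i$ for which $v_i$ is unmarked. The root is not counted, and whether it is marked is irrelevant. By hypothesis $|S|\le w$. There are at most $2^k$ possible sets $S$, which accounts for the factor $2^k$.

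Next, with $S$ fixed, I count paths whose unmarked positions are exactly $S$, going step by step from the root. Given $v_{i-1}$, the vertex $v_i$ is determined as follows.
\begin{itemize}
\item If $i\notin S$, then $v_i$ is a marked child of $v_{i-1}$, so there are at most $h$ choices.
\item If $i\in S$, then $v_i$ is some child of $v_{i-1}$, so there are at most $\Delta$ choices.
\end{itemize}
Hence the number of such paths is at most $\Delta^{|S|}h^{k-|S|}$.

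The remaining step is to replace $\Delta^{|S|}h^{k-|S|}$ by $\Delta^w h^{k-w}$, which needs $h\le\Delta$. This holds because the marked children are among the at most $\Delta$ children, so without loss of generality $h\le \Delta$. (If the stated $h$ exceeds $\Delta$, we may replace $h$ by $\min(h,\Delta)$, which only decreases the bound.) Since $|S|\le w\le k$, we get $\Delta^{|S|}h^{k-|S|}=h^k(\Delta/h)^{|S|}\le h^k(\Delta/h)^w$. The degenerate case $h=0$ is handled directly: then $|S|=k$ is forced, so $k\le w$, and since $k\ge w$ we have $k=w$ and the bound is $\Delta^k$.

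Summing over the at most $2^k$ choices of $S$ gives the total $2^k\Delta^w h^{k-w}$. I don't expect any real obstacle; the only care needed is the $h\le\Delta$ monotonicity and the $h=0$ edge case.
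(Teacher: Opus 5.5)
Your proof is correct and is essentially the paper's argument. Your set $S$ of unmarked positions is the support of the paper's signature $\phi(P)\in\{0,1\}^k$, your step-by-step count $\Delta^{|S|}h^{k-|S|}$ is its induction, and you finish the same way by summing over at most $2^k$ patterns with $|S|\le w$ using $h\le\Delta$. The paper simply assumes $h\le\Delta$, whereas you justify it by replacing $h$ with $\min(h,\Delta)$ and treat $h=0$ separately.
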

\begin{proof}
    Let $v_0$ denote the root of $\cT$. For a path $P = v_0, \dots, v_m$ from the root, define its signature by $\phi(P) = (z_1, \dots, z_m) \in \{0, 1\}^m$, as the binary string with $z_i = 0$ if $v_i$ is marked, and $z_0 = 1$, otherwise.
    
    By induction on $m$, we prove that for any binary string $\bz = (z_1, \dots, z_m) \in \{0, 1\}^m$, the number of paths with signature $\bz$ is at most $\Delta^{|\bz|} \cdot h^{m - |\bz|}$, where we denote $|\bz| = \sum_{i=1}^m z_i$.
    
    The base case $m = 0$ is trivial. Now let $m \ge 1$, consider an arbitrary string $\bz = (z_1, \dots, z_m) \in \{0, 1\}^m$ and denote $\bz' = (z_1, \dots, z_{m-1})$. Note that for any path $v_0, v_1, \dots, v_m$ in $\cT$ such that $\phi(v_0, v_1, \dots, v_m) = \bz$, it holds that $\phi(v_0, v_1, \dots, v_{m-1}) = \bz'$.

    First, assume that $z_m = 0$. Consider a path $P' = v_0, v_1,  \dots, v_{m-1}$ in $\cT$ with $\phi(P') = \bz'$. By definition, if $P = v_0, v_1, \dots, v_m$ satisfies $\phi(P) = \bz$, then $v_m$ is a marked child of $v_{m-1}$. Since each vertex has at most $h$ marked children, by the induction hypothesis, it follows that the number of paths from the root with signature $\bz$ is at most $( \Delta^{|\bz'|} \cdot h^{m-1 - |\bz'|} ) \cdot h = \Delta^{|\bz|} h^{m - |\bz|}$, where we used that $|\bz| = |\bz'|$.
    
    Now, suppose that $z_m=1$. For any path $v_0, v_1, \dots, v_{m-1}$ with signature $\bz'$, there are trivially at most $\Delta$ choices for a child $v_m$ of $v_{m-1}$ such that $\bz$ is the signature of the path $v_1, \dots, v_m$. Thus, by the induction hypothesis, the number of paths from the root with signature $\bz$ is at most $(\Delta^{|\bz'|} h^{m-1-|\bz'|}) \cdot \Delta = \Delta^{|\bz|} h^{m - |\bz|},$ where we used that $|\bz| = |\bz'| + 1$. This concludes the inductive step.

    Hence, for any $k \ge w$, the number of vertices in $\cT$ at distance $k$ from the root is at most 
    \[ \sum_{\bz \in \{0,1\}^k \, \colon \, |\bz| \le w} \Delta^{|\bz|} h^{k - |\bz|} \le 2^k \Delta^w h^{k-w}, \]
    where we used that $\Delta \ge h$.
\end{proof}

\subsection{A weaker bound and the proof of Theorem~\ref{thm:off-diagonal-general}} \label{subsec:old}

\textbf{The construction.}
Let $t \ge 2$, let $q$ be a prime power and denote $G = G(t, q)$. Let $D = D(t, q)$ be the directed graph whose vertex set is the set of all ordered pairs of vertices not forming an edge in $G$, i.e. $V(D) = \{ (a, b) \in V(G)^2 \mid (a, b) \not\in E(G) \}$, and $D$ has an arc from $(a, b)$ to $(a', b')$ if and only if $ab' \in E(G)$. 

Crucially, $D$ is $T_{t+2}$-free.
\begin{lemma} \label{lem:t+2-free}
    For any $t \ge 2$ and $q$ a prime power, the digraph $D$ is $T_{t+2}$-free.
\end{lemma}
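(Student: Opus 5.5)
The plan is to argue by contradiction and reduce directly to Lemma~\ref{lem:Hs-free}. Suppose $D$ contains a copy of $T_{t+2}$. Then we can find $t+2$ distinct vertices of $D$ and order them by the transitive order of the tournament, writing them as $v_1 = (a_1, b_1), \dots, v_{t+2} = (a_{t+2}, b_{t+2})$. With this ordering, $(v_i, v_j) \in A(D)$ for every $1 \le i < j \le t+2$.

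Next I will unpack these two facts using the definition of $D$.
\begin{itemize}
\item Since each $v_i$ is a vertex of $D$, we have $a_i b_i \notin E(G(t,q))$ for every $i \in [t+2]$.
\item By the definition of the arcs of $D$, an arc from $(a_i, b_i)$ to $(a_j, b_j)$ means exactly that $a_i b_j \in E(G(t,q))$. So $a_i b_j \in E(G(t,q))$ for all $i < j$.
\end{itemize}

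These are precisely the hypotheses of Lemma~\ref{lem:Hs-free}. That lemma says no such $2(t+2)$ vertices exist, which is the desired contradiction.

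One point needs a check. Lemma~\ref{lem:Hs-free} does not require the $a_i$, or the $b_i$, to be pairwise distinct; its proof only uses the inner-product conditions on representatives. So nothing extra is needed when the coordinates of distinct pairs happen to coincide. Likewise, loops in $G(t,q)$ cause no trouble, because the conditions are stated purely in terms of orthogonality. Beyond this there is no real obstacle: the lemma is a direct translation of the definition.
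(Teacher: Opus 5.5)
Your proposal is correct and is essentially the paper's proof: you order the $t+2$ vertices of the $T_{t+2}$ copy transitively, read off $a_ib_i \notin E(G)$ from membership in $V(D)$ and $a_ib_j \in E(G)$ for $i<j$ from the arcs, and obtain a direct contradiction with Lemma~\ref{lem:Hs-free}. Your remark that Lemma~\ref{lem:Hs-free} needs neither distinctness of the $a_i$ or $b_i$ nor any special treatment of loops is also accurate.
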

\begin{proof}
    Denote $G = G(t, q)$ and $D = D(t, q)$ and suppose that $D$ contains a copy of $T_{t+2}$ formed by vertices $v_1, \dots, v_{t+2}$ in this order. For $i \in [t+2]$, let $v_i = (a_i, b_i)$, where $a_i, b_i \in V(G)$. Since for $i \in [t+2]$, $v_i \in V(D)$, by definition, we have that $a_i b_i \not\in E(G)$. Furthermore, for $i, j \in [t+2], i < j$ since $v_i v_j \in A(D),$ by definition, it holds that $a_i b_j \in E(G)$. This directly contradicts Lemma~\ref{lem:Hs-free}.
\end{proof}

Throughout the rest of this subsection, we shall assume that $q$ is sufficiently large, i.e. larger than some absolute constant. Hence, by Lemma~\ref{lem:properties}, we have that $G$ is an $(n, d, \lambda)$-graph with $n \in [q^t / 2, 2q^t], d \in [q^{t-1} / 2, 2q^t]$ and $\lambda \le 2 \sqrt{d}$.
Next, we show that $D$ contains few forward independent tuples. As mentioned, our proof for this closely follows the argument of Alon and R\"{o}dl~\cite{alon-rodl}.

\begin{lemma} \label{lem:old-fw-indep-counting}
    Let $q$ be a sufficiently large prime power, let $t \ge 2$ and set $w = 32 t q \log q$. Then, for any $k \ge w$,
    \[ \fwi{k}(D) \le 8^k \left(\frac{\lambda^2}{d^2}\right)^{k-w}  n^{2k}. \]
\end{lemma}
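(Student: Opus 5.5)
The plan is to apply Lemma~\ref{lem:tree-counting} to the tree $\cT$ of forward independent tuples in $D$. The root is the empty tuple, and the children of a forward independent $m$-tuple $((a_1,b_1),\dots,(a_m,b_m))$ are the forward independent $(m+1)$-tuples obtained by appending one vertex $(a,b)\in V(D)$. Paths of length $k$ from the root correspond exactly to forward independent $k$-tuples. Trivially every node has at most $\Delta = n^2$ children. The work is in choosing the marking so that marked children are few and unmarked steps are rare, in the spirit of the Alon--R\"{o}dl container argument.

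For a node $P=((a_1,b_1),\dots,(a_m,b_m))$, define the container
\[ C_P = V(G)\setminus \bigcup_{i=1}^m N_G(a_i), \]
the set of vertices with no neighbour among $a_1,\dots,a_m$. By forward independence, any child $(a,b)$ of $P$ must satisfy $a_ib\notin E(G)$ for all $i\le m$, so $b\in C_P$. Moreover, containers only shrink along a path: if $P'$ is the child obtained by appending $(a,b)$, then $C_{P'} = C_P\setminus N_G(a)$. Call the child $(a,b)$ \emph{marked} if $|N_G(a)\cap C_P|\le \frac{d|C_P|}{2n}$, i.e.\ if $a$ lies in the set $A$ of Lemma~\ref{lem:BC} applied with $B=C_P$.

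The two required bounds then come as follows.

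\emph{Marked children.} A marked child has $a\in A$ and $b\in C_P$. By Lemma~\ref{lem:BC}, the number of marked children is at most
\[ h := |A||C_P| \le \frac{4\lambda^2}{d^2}n^2. \]

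\emph{Unmarked steps.} An unmarked step removes more than a $\frac{d}{2n}$ fraction of the container. Hence after $u$ unmarked vertices on a path, the container has size at most $n(1-\frac{d}{2n})^u \le n e^{-ud/(2n)}$. Once the container is empty, the node has no children at all. So the number of unmarked vertices on any path is at most roughly $\frac{2n}{d}\log n + 1$. Using $n/d\le 4q$ (from the bounds $n\in[q^t/2,2q^t]$ and $d\ge q^{t-1}/2$) and $\log n\le t\log q+1$, this is at most $8q(t\log q+1)+1 \le 32tq\log q = w$ for $q$ large.

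Lemma~\ref{lem:tree-counting} then gives, for $k\ge w$,
\[ \fwi{k}(D)\le 2^k (n^2)^w \left(\frac{4\lambda^2 n^2}{d^2}\right)^{k-w} \le 8^k\left(\frac{\lambda^2}{d^2}\right)^{k-w} n^{2k}. \]
This uses $h\le \Delta$, which holds because $\lambda^2\le 4d$ is much smaller than $d^2$.

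The step I expect to need the most care is the choice of container. It must simultaneously constrain the $b$-coordinate of every later vertex and depend only on the $a$-coordinates chosen so far, so that the marking condition is a condition on the new $a$ alone and Lemma~\ref{lem:BC} applies directly with $B=C_P$. A second point to check is that loops cause no trouble. Since $v\in N_G(v)$ when there is a loop, the expander mixing lemma and Lemma~\ref{lem:BC} apply verbatim. Also, the constraint $ab\notin E(G)$ for vertices of $D$ is only an extra restriction on children, and can be ignored for the upper bound.
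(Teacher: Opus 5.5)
Your proof is correct and follows the paper's argument. Your container $C_P$ and your marking rule (mark a child when $a$ lies in the set $A$ of Lemma~\ref{lem:BC} applied to $C_P$) are exactly the paper's $B^\sigma$ and $A^\sigma\times B^\sigma$, the unmarked steps are controlled by the same geometric shrinking of the container, and you conclude with Lemma~\ref{lem:tree-counting}. The only differences are cosmetic: you bound the number of unmarked vertices by requiring the parent's container to be nonempty, where the paper instead uses $b_m\in B^\sigma$ with exponent $w-1$, and you explicitly check $h\le\Delta$, which the paper leaves implicit.
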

To facilitate the reader's understanding, let us point out that to obtain the lower bound $r(s, k) \ge k^{s-2+o(1)}$ for fixed $s$, we shall apply Lemma~\ref{lem:old-fw-indep-counting} with $k = \Theta(q (\log q)^2)$ so that the above bound reduces to $\fwi{k}(D) \le O\left( \frac{\lambda^2 n^2}{d^2} \right)^k$. On the other hand, to prove Theorem~\ref{thm:off-diagonal-general}, we shall have $q \approx \frac{\delta}{200} \frac{k}{s} / \log\left( \frac{k}{s}\right)$ so that the above bound implies $\fwi{k}(D) \le \left(\left(\frac{\lambda^2}{d^2}\right)^{1 - \delta/5} n^2\right)^k$.

\begin{proof}[Proof of Lemma~\ref{lem:old-fw-indep-counting}]
    Note that a forward independent $m$-tuple in $D$ corresponds to a $2m$-tuple $(a_1, b_1, \dots, a_m, b_m) \in V(G)^{2m}$ such that $a_i b_i \not \in E(G)$ for all $i \in [m]$ and $a_i b_j \not\in E(G)$ for all $i < j \in [m]$. We call such a $2m$-tuple \emph{coherent}.

    It will be convenient to view the set of coherent $2m$-tuples as a rooted tree $\cT$, where the root of $\cT$ is the empty tuple and the children of a coherent $2m$-tuple $(a_1, b_1, \dots, a_m, b_m)$ are the coherent $2(m+1)$-tuples obtained by appending a pair, i.e. coherent $2(m+1)$-tuples of the form $(a_1, b_1, \dots, a_m, b_m, a_{m+1}, b_{m+1})$.

    With the aim of applying Lemma~\ref{lem:tree-counting}, we set $h = \frac{4 \lambda^2 n^2}{d^2}$ and $\Delta = n^2$ and recall that $w = 32 t q \log q$. Let $m \ge 0$ be arbitrary and let $\sigma = (a_1, b_1, \dots, a_m, b_m)$ be an arbitrary coherent $2m$-tuple. Define 
    \[ B^\sigma= V(G) \setminus \bigcup_{i=1}^m N_G(a_i) \text{ and } A^\sigma = \left\{ a \in V(G) \mid |N_G(a) \cap B^\sigma| \le \frac{d}{2n} |B^\sigma| \right\}. \]

    By Lemma~\ref{lem:BC}, we have 
    \begin{equation} \label{eq:A-times-B}
        |A^\sigma||B^\sigma| \le \frac{4 \lambda^2 n^2}{d^2} = h.
    \end{equation}
    Observe that any pair $(a, b) \in V(D)$ that can be appended to $\sigma$, i.e. such that $(a_1, b_1, \dots, a_m, b_m, a, b)$ is coherent, satisfies $b \in B^\sigma$. Indeed, otherwise, there is some $i \in [m]$ such that $a_i b \in E(G)$, which is not allowed by definition. We mark all children of $\sigma$ obtained by appending a pair $(a, b) \in A^\sigma \times B^\sigma$ and the remaining children of $\sigma$ are left unmarked. By~\eqref{eq:A-times-B}, we have marked at most $h$ children of $\sigma$. 

    Now, suppose that for some $m$ there is a coherent $m$-tuple  $\sigma = (a_1, b_1, \dots, a_m, b_m)$ such that there are more than $w$ unmarked vertices on the path in $\cT$ from the root to $\sigma$. Then,
    \[ |B^\sigma| \le n \cdot \left( 1 - \frac{d}{2n}\right)^{w-1} \le 2 q^t \cdot \exp\left(- \frac{d}{2n} \cdot 16tq \log q \right) \le 2 q^t \cdot \exp\left( - \frac{1}{8q} \cdot 16t q \log q\right) < 1. \]
    This implies that $B^\sigma = \emptyset$, but $b_m \in B^\sigma$, a contradiction.

    Clearly, every vertex of $\cT$ has at most $n^2 = \Delta$ children, thus, by Lemma~\ref{lem:tree-counting}, we have
    \[ \fwi{k}(D) \le 2^k \Delta^w h^{k-w} = 2^k n^{2w} \left(\frac{4 \lambda^2 n^2}{d^2} \right)^{k-w} \le 8^k \left( \frac{\lambda^2}{d^2}\right)^{k-w} n^{2k}, \]
    as needed.
\end{proof}

We are ready to deduce the weaker lower bound $r(s, k) \ge k^{s-2+o(1)}$.

\begin{theorem} \label{thm:s-2}
    For any $s \ge 4$ there is a constant $c_s > 0$, such that for all $k \ge 2,$ it holds that
    \[ r(s, k) \ge c_s \frac{k^{s-2}}{(\log k)^{2s-6}}. \]
\end{theorem}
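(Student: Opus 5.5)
We apply the construction with $t = s-2$. By Lemma~\ref{lem:t+2-free}, $D = D(s-2,q)$ is $T_s$-free, and it has $|V(D)| = n(n-d) \ge n^2/2$ vertices. Given $k$ (large; small $k$ are absorbed into $c_s$), we choose $q$ to be a prime power with $k = C q (\log q)^2$ for a suitable large constant $C = C(s)$. By Bertrand's postulate such a $q$ can be found within a factor $2$, which only costs constants. Then $k \ge w = 32 t q\log q$, and Lemma~\ref{lem:old-fw-indep-counting} gives
\[ \fwi{k}(D) \le 8^k \Big(\frac{\lambda^2}{d^2}\Big)^{k-w} n^{2k} = 8^k \Big(\frac{\lambda^2 n^2}{d^2}\Big)^{k} \Big(\frac{d^2}{\lambda^2}\Big)^{w} n^{-2w}\cdot n^{2w}\Big/ n^{2w}. \]
More simply, we bound the extra factor as $(d^2/\lambda^2)^w \le (4q^{t})^{w}$, since $d^2/\lambda^2 \le d$. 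Because $w \log(4q^t) = O(t^2 q (\log q)^2) = O(k/C)$, taking $C$ large makes this factor at most $2^k$. Hence $\fwi{k}(D) \le (16\lambda^2 n^2/d^2)^k$.

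Next we apply Lemma~\ref{lem:Ts-free-Ks-free}. This yields a $K_s$-free graph $\Gamma$ on $|V(D)|$ vertices with
\[ i_k(\Gamma) \le \Big(\frac{16 e \lambda^2 n^2}{d^2 k}\Big)^k. \]
We then apply Lemma~\ref{lem:sampling-Ks-free} with $p = \frac{d^2 k}{16e\lambda^2 n^2}$. We must check that $p \le 1$. Using $\lambda^2 \le 4d$ from the bounds after Lemma~\ref{lem:t+2-free}, and $d/n \asymp 1/q$, we get $p \asymp dk/n^2 \asymp k q^{-t-1} \ll 1$, as required. Also $\lambda \ge \sqrt{d}/2$, so $p|V(D)| \ge \Omega(d^2 k/\lambda^2) = \Omega(dk)$. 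This gives
\[ r(s,k) \ge \Omega(dk) - 1 = \Omega(q^{t-1} k) = \Omega(q^{s-3}\cdot q(\log q)^2) = \Omega(q^{s-2}(\log q)^2). \]

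Finally we convert to $k$. Since $q \asymp k/(\log k)^2$, we have $q^{s-2}(\log q)^2 \asymp k^{s-2}/(\log k)^{2s-4}\cdot (\log k)^2 = k^{s-2}/(\log k)^{2s-6}$, which is the claimed bound.

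The main obstacle is the bookkeeping of the $w$ loss term. Only the factor $(d^2/\lambda^2)^w$ must be absorbed, and this is exactly what forces $k \gtrsim t^2 q(\log q)^2$ and hence the $(\log k)^{2s-6}$ loss. One must also ensure the choice of $q$ versus $k$ keeps all constants dependent only on $s$. Note the bound $d \le 2q^t$ stated in the paper is loose; the real estimate $d \asymp q^{t-1}$ from Lemma~\ref{lem:properties} should be used instead.
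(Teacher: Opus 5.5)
Your argument is correct and is essentially the paper's proof: you take $t=s-2$, choose $q$ with $k\asymp_s q(\log q)^2$, and apply Lemma~\ref{lem:old-fw-indep-counting}, absorbing the $w$-loss into $2^k$ via $w\log(4q^t)=O(t^2q(\log q)^2)$; you then apply Lemmas~\ref{lem:Ts-free-Ks-free} and~\ref{lem:sampling-Ks-free} with $p\asymp dk/n^2$ to get $\Omega(dk)=\Omega(kq^{s-3})$. The remaining issues are cosmetic, not gaps: your first display carries a spurious factor $n^{-2w}$ (the correct identity is $8^k(\lambda^2/d^2)^{k-w}n^{2k}=8^k(\lambda^2n^2/d^2)^k(d^2/\lambda^2)^w$), and ``$d^2/\lambda^2\le d$'' is slightly false since $\lambda^2<d$, though $d^2/\lambda^2\le 2d\le 4q^t$ still holds; also, you cite $\lambda^2\le 4d$ and $\lambda\ge\sqrt d/2$ in each other's places when checking $p\le 1$ and when bounding $p|V(D)|$ from below --- the paper sidesteps needing any lower bound on $\lambda$ by substituting $\lambda^2/d^2\le 4/d$ into Lemma~\ref{lem:old-fw-indep-counting} before absorbing the $w$ term.
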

\begin{proof}
    By reducing $c_s$, we may assume that $k$ is sufficiently large. Denote $t = s-2$ and let $q$ be a prime power, say a power of $2$, such that $100t^2 q (\log q)^2 \le k \le 400 t^2 q (\log q)^2,$ which clearly exists for any sufficiently large $k$. Let $D = D(t, q)$. By Lemma~\ref{lem:t+2-free}, $D$ is $T_s$-free and by Lemma~\ref{lem:old-fw-indep-counting}, denoting $w = 32t q \log q,$ we have
    \[ \fwi{k}(D) \le 8^k \left(\frac{\lambda^2}{d^2}\right)^{k - w} n^{2k} \le 8^k \left(\frac{4}{d}\right)^{k-w} n^{2k} \le \left( \frac{64 n^2}{d}\right)^k, \]
    where in the last inequality we used that $d^w \le 2^k$, since $k \ge 100 t^2 q (\log q)^2, w = 32 t q \log q$ and $d \le 2q^{t-1}$. 
    Applying Lemma~\ref{lem:Ts-free-Ks-free}, we obtain a $K_s$-free graph $\Gamma$ with $|V(D)|$ vertices such that 
    \[ i_k(\Gamma) \le \left( \frac{e}{k}\right)^k \fwi{k}(D) \le \left(\frac{200 n^2}{kd}\right)^k. \]
    Finally, set $p = \frac{kd}{200 n^2},$ so that $p^k i_k(\Gamma) \le 1$ and note that clearly $p \le 1$. Hence, Lemma~\ref{lem:sampling-Ks-free} yields
    \[ r(s, k) \ge p |V(\Gamma)| - 1 \ge p (n^2 / 2) - 1 \ge \frac{kd}{400} - 1 \ge \frac{k q^{s-3}}{800} - 1 \ge c_s \frac{k^{s-2}}{(\log k)^{2s-6}}, \]
    for a sufficiently small constant $c_s>0$, where in the last inequality we used that $q \ge c'_s k / (\log k)^2$ for a positive constant $c'_s$ depending only on $s$.
\end{proof}

The proof of Theorem~\ref{thm:off-diagonal-general} is nearly identical, except the optimal choice of $q$ is different.

\begin{proof}[Proof of Theorem~\ref{thm:off-diagonal-general}]
    Fix $\delta > 0$ and note that we may assume that $\delta < 1/10$, say. Let $L = L(\delta)$ be a sufficiently large constant to be chosen implicitly later. Let $s, k$ be given positive integers such that $s \ge L$ and $k \ge Ls$. Let $q$ be a prime power, say a power of $2$, such that $q \le \frac{\delta}{200} \cdot \frac{k}{s} / \log\left( \frac{k}{s} \right) \le 2q$. Note that we may assume that $q$ is sufficiently large since $k/s \ge L$. Also, observe that by taking $L$ sufficiently large, we have $k/s \ge q \ge \left( \frac{k}{s} \right)^{1 - \delta / 2},$ since $k/s \ge L$.
    
    Let $t = s-2, w = 32 t q \log q$ and note that 
    \[ w \le 32t \cdot  \left(\frac{\delta}{200} \cdot \frac{k}{s} / \log \left( \frac{k}{s}\right)\right) \cdot \log\left( \frac{k}{s} \right) \le  \frac{\delta}{5} \cdot k.\]
    Let $D = D(t, q)$. By Lemma~\ref{lem:t+2-free}, $D$ is $T_s$-free and by Lemma~\ref{lem:old-fw-indep-counting},
    \[ \fwi{k}(D) \le 8^k \left(\frac{\lambda^2}{d^2}\right)^{k-w} n^{2k} \le 32^k d^{-(k-w)} n^{2k} \le \left(32 d^{-(1 - \delta/5)} n^2 \right)^k. \]
    Applying Lemma~\ref{lem:Ts-free-Ks-free}, we obtain a $K_s$-free graph with $|V(D)|$ vertices and 
    \[ i_k(\Gamma) \le \left(\frac{e}{k}\right)^k \fwi{k}(D) \le \left(d^{-(1 - \delta/5)} n^2 \right)^k, \]
    where we used that $k$ is sufficiently large. Set $p = d^{1 - \delta/5} / n^2$ and note that $p \le 1$ and $p^k i_k(\Gamma) \le 1.$ Finally, applying Lemma~\ref{lem:sampling-Ks-free}, we obtain
    \[ r(s, k) \ge p |V(\Gamma)| - 1 \ge d^{1 - \delta/5} / n^2 \cdot (n^2 / 2) - 1 \ge d^{1 - \delta/5} / 4 \ge (q^{t-1} / 2)^{1 - \delta/5} / 4 \ge \left( \frac{k}{s}\right)^{(1 - \delta) s}, \]
    where in the last inequality we used $t=s-2, q \ge (k/s)^{1-\delta/2}$ and $k/s \ge L,$ where $L$ is large compared to $\delta$.
\end{proof}

\subsection{Proof of Theorem~\ref{thm:main}} \label{subsec:s-1}
In this subsection, we prove Theorem~\ref{thm:main}. The key observation that improves the result of the previous subsection is the following. To obtain the lower bound $r(s, k) \ge k^{s-2 + o(1)}$, we used that the graph $G = G(t, q)$ has no vertices $a_1, b_1, \dots, a_{t+2}, b_{t+2}$ such that $a_i b_i \not\in E(G)$ for all $i \in [t+2]$ and $a_i b_j \in E(G)$ for all $i < j, i, j \in [t+2]$. Clearly, this implies that $G$ also does not contain the configuration obtained by removing the vertices $b_1$ and $a_{t+2}$; see Figure~\ref{fig:configurations} for an illustration. We construct a digraph $D^*(t, q)$ designed to exploit this fact as follows.

\begin{figure}
    \centering
    \includegraphics[width=0.9\linewidth]{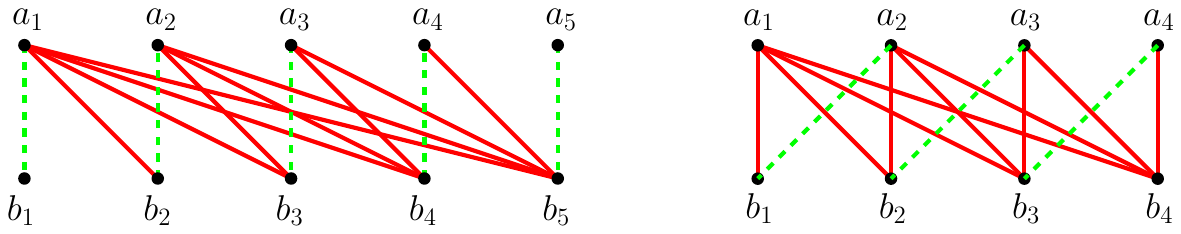}
    \caption{A representation of the configurations avoided in $G = G(t, q)$ with $t=3$. The full red lines represent edges of $G$, while the dashed green lines represent nonedges. The forbidden configuration on the left was used to obtain the lower bound $r(s, k) \ge k^{s-2+o(1)}$, and the one on the right will be used to obtain the tight bound $r(s, k) \ge k^{s-1+o(1)}$. Note that the latter configuration is obtained from the former by removing the vertices $b_1$ and $a_{t+2}$ and renaming $b_{i+1}$ by $b_i$ for $i \in [t+1]$.}
    \label{fig:configurations}
\end{figure}

\textbf{The construction.} Let $t \ge 2$, let $q$ be a prime power and denote $G = G(t, q)$. Let $D^* = D^*(t, q)$ be the directed graph with $V(D^*) = \{ (a, b) \mid ab \in E(G) \}$ and a directed edge from $(a, b)$ to $(a', b')$ if and only if $ab' \in E(G)$ and $a' b \not\in E(G)$. 

Crucially, we save one in the size of the largest transitive subtournament in $D^*$ compared to $D(t, q)$ used in Subsection~\ref{subsec:old}. As mentioned, this can be easily deduced from Lemma~\ref{lem:Hs-free}, but we also provide a direct proof.
\begin{lemma} \label{lem:Ts1-free}
    For every $t \ge 2$ and $q$ a prime power, $D^*(t, q)$ is $T_{t+1}$-free.
\end{lemma}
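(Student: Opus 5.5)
We argue by contradiction and reduce to the linear-algebra argument of Lemma~\ref{lem:Hs-free}. Suppose $D^* = D^*(t,q)$ contains a copy of $T_{t+1}$ on vertices $v_1, \dots, v_{t+1}$ in this order, so that $(v_i, v_j) \in A(D^*)$ for all $i < j$. Write $v_i = (a_i, b_i)$. By definition of $V(D^*)$ we have $a_i b_i \in E(G)$ for every $i$. For $i<j$, the arc condition gives $a_i b_j \in E(G)$ and $a_j b_i \not\in E(G)$.

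The idea is to rename vertices so that we obtain the configuration of Figure~\ref{fig:configurations} (right), which is the configuration of Lemma~\ref{lem:Hs-free} with $b_1$ and $a_{t+2}$ removed. Set $a'_i = a_{i+1}$ and $b'_i = b_i$ for $i \in [t]$. Then for every $i \in [t]$ we have $a'_i b'_i = a_{i+1} b_i \notin E(G)$. For $i<j$ in $[t]$ we need $a'_i b'_j \in E(G)$, that is, $a_{i+1} b_j \in E(G)$. This holds whenever $i+1<j$ by the arc condition, and when $i+1 = j$ it is the edge $a_j b_j \in E(G)$ coming from $v_j \in V(D^*)$. So $a'_1,b'_1,\dots,a'_t,b'_t$ form a $t$-term half-graph pattern, and this uses only $a_2,\dots,a_{t+1}$ and $b_1,\dots,b_t$. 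The pattern has length $t$, not $t+2$, so Lemma~\ref{lem:Hs-free} does not apply directly; we must use the leftover data.

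Translate everything to vectors: $\bx_i$ represents $a_i$ and $\by_i$ represents $b_i$. We know $\langle \bx_i,\by_j\rangle = 0$ for $i \le j$, including $i=j$, and $\langle \bx_j,\by_i\rangle \ne 0$ for $i<j$. The plan is to show directly that $\by_1,\dots,\by_{t+1}$ are linearly independent, using the \emph{last} index rather than the first. Take a nontrivial relation $\sum \alpha_j \by_j = 0$ and let $i$ be the maximal index with $\alpha_i \ne 0$. Pairing with $\bx_{i+1}$ kills the terms with $j \ge i+1$, but those vanish anyway. The remaining terms $j \le i$ are nonzero pairings, so this naive choice fails.

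Instead, pair with $\bx_i$ and take $i$ minimal: for $j \ge i$, $\langle \bx_i,\by_j\rangle = 0$, so the relation gives no information. The correct structure is therefore the following. Let $\bx_1$ be orthogonal to all of $\by_1,\dots,\by_{t+1}$, and $\bx_1 \ne 0$. If we show that $\by_1,\dots,\by_{t+1}$ span $\bF_q^{t+1}$, we get a contradiction. Linear independence of the $\by_j$ follows from the matrix $M_{ij} = \langle \bx_{i+1},\by_j\rangle$ for $i,j \in [t]$. This matrix is lower triangular with nonzero diagonal, since $M_{ii} = \langle \bx_{i+1},\by_i\rangle \ne 0$ and $M_{ij} = 0$ for $j \ge i+1$. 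Hence $\by_1,\dots,\by_t$ are independent, and likewise $\bx_2,\dots,\bx_{t+1}$.

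To include $\by_{t+1}$: it is orthogonal to $\bx_1,\dots,\bx_{t+1}$. Also $\bx_1$ is independent from $\bx_2,\dots,\bx_{t+1}$, because pairing with $\by_1$ gives $\langle \bx_1,\by_1\rangle = 0$ while a nontrivial combination $\sum_{i \ge 2} \beta_i \bx_i$ with minimal-index trick... Here, rather than $\by_1$, I would pair with $\by_j$ for the largest $j$ such that $\beta_{j+1} \ne 0$, using triangularity. Then $\bx_1,\dots,\bx_{t+1}$ form a basis, so $\by_{t+1} = 0$, which is a contradiction since $\by_{t+1}$ represents a projective point.

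The main obstacle is exactly this bookkeeping: choosing the right extremal index so that the triangular structure yields $t+1$ independent vectors $\bx_1,\dots,\bx_{t+1}$ all orthogonal to the nonzero vector $\by_{t+1}$.
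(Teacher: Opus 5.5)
Your proof is correct. It is the paper's argument transported by the symmetry $\bx_i \leftrightarrow \by_{t+2-i}$ of the configuration; equivalently, $(a,b)\mapsto(b,a)$ reverses every arc of $D^*$.

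The paper adjoins a nonzero $\bx_{t+2}$ with $\langle \bx_{t+2},\by_{t+1}\rangle\neq 0$. This extends your $M$ to a $(t+1)\times(t+1)$ lower-triangular matrix with nonzero diagonal, so $\by_1,\dots,\by_{t+1}$ are independent. The contradiction then needs the observation you made first: the nonzero $\bx_1$ is orthogonal to every $\by_j$, so they all lie in the $t$-dimensional space $\bx_1^{\perp}$. The paper's write-up stops at the independence claim and leaves this last step implicit.

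So the plan you dropped at ``To include $\by_{t+1}$'' is the paper's route. All it needed was $\by_{t+1}\neq 0$, to rule out a relation whose minimal nonzero index is $t+1$; relations with smaller minimal index are killed by pairing with $\bx_{i+1}$, exactly as in your $M$.

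Your version proves $\bx_1,\dots,\bx_{t+1}$ independent and then forces $\by_{t+1}=0$, with $\bx_1\neq 0$ doing the job of the auxiliary vector. It has the small advantage of ending in an explicit contradiction.

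When writing it up, drop the false starts and the unfinished ``minimal-index trick'' sentence, and state the argument in one step. Suppose $\sum_{i=1}^{t+1}\gamma_i\bx_i=0$ and $m$ is the largest index with $\gamma_m\neq 0$. Then $m\ge 2$ is impossible: pairing with $\by_{m-1}$ kills every term with $i\le m-1$ and leaves $\gamma_m\langle \bx_m,\by_{m-1}\rangle\neq 0$. And $m=1$ is impossible because $\bx_1\neq 0$.
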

\begin{proof}
    Let $D^* = D^*(t, q)$ and for the sake of contradiction, suppose there are vertices $(a_1, b_1), (a_2, b_2), \dots, (a_{t+1}, b_{t+1})$ forming a copy of $T_{t+1}$ in $D^*$ in this order. By definition, there are corresponding nonzero vectors $\bx_1, \by_1, \dots, \bx_{t+1}, \by_{t+1} \in \bF_q^{t+1}$ such that for all $i \in [t+1],$ we have $\langle \bx_i, \by_i \rangle = 0$, since $(a_i, b_i) \in V(D^*)$, and for all $i < j, i, j \in [t+1]$, it holds that $\langle \bx_i, \by_j \rangle = 0$ and $\langle \bx_j, \by_i \rangle \neq 0$, since $((a_i, b_i), (a_j, b_j)) \in A(D^*)$. We remark that the only pairs for which we shall use that $\langle \bx_j, \by_i \rangle \neq 0$ are $(i+1, i), i \in [t]$. Since $\by_{t+1}$ is a nonzero vector, we find a vector $\bx_{t+2} \in \bF_q^{t+1} \setminus \{0\}$ such that $\langle \bx_{t+2}, \by_{t+1} \rangle \neq 0.$

    We claim that the vectors $\by_1, \dots, \by_{t+1}$ are linearly independent. Indeed, suppose there is a nontrivial linear combination $\sum_{j=1}^{t+1} \alpha_j \by_j = 0$ and let $i$ be the minimum index $j \in [t+1]$ such that $\alpha_j \neq 0.$ Then,
    \[ 0 = \sum_{j=1}^{t+1} \alpha_j \by_j = \sum_{j=i}^{t+1} \alpha_j \by_j. \]

    Taking the inner product with $\bx_{i+1}$ and recalling that $\langle \bx_{i+1}, \by_j \rangle = 0$ for all $j$ satisfying $i+1 \le j,$ we have
    \[ \alpha_i \langle \bx_{i+1}, \by_i \rangle = 0. \]
    On the other hand, $\langle \bx_{i+1}, \by_i \rangle \neq 0$, which implies $\alpha_i = 0$, contradicting our choice of $i$.    
\end{proof}

The argument to count forward independent tuples in $D^*$ is slightly more involved than the one used for $D(t, q)$ in the previous subsection. The basic idea, however, is the same. We pick a forward independent tuple one vertex at a time. At each step, there are few ``cheap'' choices, while the other choices drastically reduce the number of available choices for future vertices.

\begin{lemma} \label{lem:fw-ind-sets-s1}
    For any $t \ge 2,$ there is a constant $C = C(t)$ such that for any $q \ge C$, the following holds. If $k \ge C q (\log q)^2$, then
    \[ \fwi{k}(D^*(t, q)) \le (C q^t)^k. \]
\end{lemma}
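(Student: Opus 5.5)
The plan is to run the same tree-counting scheme as in Lemma~\ref{lem:old-fw-indep-counting}, via Lemma~\ref{lem:tree-counting}. The tree $\cT$ has as its root the empty tuple, and as the children of a forward independent $m$-tuple $\sigma=((a_1,b_1),\dots,(a_m,b_m))$ its forward independent extensions by one pair $(a,b)$. I will work with vectors: $a_i,b_i$ are represented by $\bx_i,\by_i$, and $a,b$ by $\bx,\by$. The conditions on the new pair are $\langle\bx,\by\rangle=0$ and, for every $i\le m$, either $\langle \bx_i,\by\rangle\ne0$ or $\langle\bx,\by_i\rangle=0$.

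\textbf{Parametrizing the children.} Fix $b$ and put $I_b=\{i : a_ib\in E(G)\}$ and $W_b=\mathrm{span}(\{\by\}\cup\{\by_i : i\in I_b\})$. The admissible $a$ for this $b$ are exactly the points of the projective subspace $W_b^\perp$, so there are about $q^{t-\dim W_b}$ of them. I mark the children with $\dim W_b\ge t$. For each $b$ there is then at most one admissible $a$, so there are at most $h:=n\le 2q^t$ marked children. Trivially every node has at most $\Delta:=n^2\le 4q^{2t}$ children.

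\textbf{Parameters for Lemma~\ref{lem:tree-counting}.} It suffices to show that every root path contains at most $w=C'q\log q$ unmarked vertices. Lemma~\ref{lem:tree-counting} then gives
\[
\fwi{k}(D^*)\le 2^k\Delta^w h^{k-w}\le 2^k (2q^t)^k\,(4q^{t})^{w}\le (Cq^t)^k,
\]
because $q^{tw}=e^{O(tq(\log q)^2)}\le 2^k$ once $k\ge Cq(\log q)^2$.

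\textbf{Bounding unmarked steps.} For each $b'\in V(G)$ I track $r(b')=\dim W_{b'}$, which only increases along a path, together with the ``bad'' set $B^\sigma=\{b' : \dim W_{b'}\le t-1\}$. Note that $W_{b'}$ always contains $\by'$, so $r(b')\ge 1$. I use the potential $\Phi(\sigma)=\sum_{b'\in B^\sigma}(t-r(b'))\le tn$. An unmarked step appends $(a,b)$ with $b\in B^\sigma$. For every future $b'\in N_G(a)$, the vector $\by$ joins $W_{b'}$, so $r(b')$ increases by one unless $\by\in W_{b'}$.

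As in Lemma~\ref{lem:BC}, I split the unmarked children once more. Let $A^\sigma$ be the set of $a$ with $|N_G(a)\cap B^\sigma|\le \frac{d}{2n}|B^\sigma|$; by Lemma~\ref{lem:BC}, $|A^\sigma||B^\sigma|\le 16q^{t+1}$. I will try to declare the children with $a\in A^\sigma$ (and $b\in B^\sigma$) marked as well. Together with $b$, this costs about $q\cdot q^{t}$ choices per node instead of $q^t$, which is affordable only if $|A^\sigma|$ is typically $O(q^{t}/|B^\sigma|)$. So I expect to need a weighted version of Lemma~\ref{lem:tree-counting}, in which each step is charged $|B^\sigma|\,q^{t-\dim W_b}$.

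For the remaining children, $a$ has at least $\frac{d}{2n}|B^\sigma|\approx |B^\sigma|/(2q)$ neighbours in $B^\sigma$, and I want $\Phi$ to drop by a factor $1-\Omega(1/(tq))$. Iterating, after $O(t^2 q\log q)$ such steps $\Phi<1$ and $B^\sigma=\emptyset$. But an unmarked step needs $b\in B^\sigma$, a contradiction.

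\textbf{Main obstacle.} The hard step is the degenerate case in which $\by\in W_{b'}$ for most neighbours $b'$ of $a$ in $B^\sigma$, so that the potential does not move. My first attempt is a second layer of marking. Since $\dim W_{b'}\le t-1$ and $b'\in N(a)$ forces a strong linear condition, I would show via a dimension count on the subspaces $W_{b'}$ that only $O(q^{t})$ pairs $(a,b)$ can be degenerate for $\sigma$, and mark those too. If that fails, I would replace the single potential by the level sets $B_r=\{b' : r(b')=r\}$ and run the Alon--R\"odl argument separately at each of the $t$ levels, accepting a $t$-dependent constant $C(t)$.
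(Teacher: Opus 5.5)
There is a genuine gap. Your reduction to Lemma~\ref{lem:tree-counting} is fine, and so is your description of the admissible partners of a given $b$ as the points of $W_b^\perp$. But the proposal stops exactly where the content of the lemma lies: you never prove that a root path contains only $O(q\log q)$ unmarked vertices.

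The first marking ($\dim W_b\ge t$) cannot do this on its own. Appending the same pair $(a,b)$ with $a\perp b$ over and over is forward independent and keeps $\dim W_b=1$. So everything rests on the second layer of marking, and for the degenerate steps you only list two untested ideas.

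The first idea, a single-level dimension count, does not give $O(q^t)$ via the natural double counting. Each $W_{b'}$ with $b'\in B^\sigma$ has up to about $q^{t-2}$ points. So the number of $b$ lying in $\Omega(|B^\sigma|/q)$ of these spaces is only bounded by $O(q^{t-1})$. Such a $b$ may have $\dim W_b$ as small as $1$, and hence about $q^{t-1}$ partners $a$. That gives $O(q^{2t-2})$ marked children, which is too many for $t\ge 3$.

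By contrast, your worry about marking $A^\sigma\times B^\sigma$ is unfounded. Appended pairs satisfy $a\perp b$, so only $e_G(A^\sigma,B^\sigma)\le |A^\sigma|\frac{d}{2n}|B^\sigma|=O(q^t)$ children are marked that way. No weighted tree lemma is needed.

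The missing idea is to stratify by rank and match the popularity threshold to the extension count. The paper sets $W(y)=\sum\{b_i : a_i\perp y\}$ (not including $y$), $Z_r=\{y:\dim W(y)=r\}$ and $U_r=Z_0\cup\dots\cup Z_r$. A new $b$ with $\dim W(b)=r$ then has at most $2q^{t-r}$ partners $a$. For each $r\le t$ it picks $\ell\le r$ with $|Z_\ell|$ maximal, so that $|Z_\ell|\ge |U_r|/(2t)$. It then marks two kinds of children:
\begin{itemize}
\item those whose $b\in Z_r$ lies in at least $|Z_\ell|/(16q)$ of the $\ell$-dimensional spaces $W(y)$, $y\in Z_\ell$. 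There are at most $32q^\ell$ such $b$, each with at most $2q^{t-r}\le 2q^{t-\ell}$ partners, so $O(q^t)$ children.
\item those whose $a$ has at most $|Z_\ell|/(8q)$ neighbours in $Z_\ell$. These number $O(q^t)$ by the expander mixing lemma applied to $e_G(P_r,Z_r)$, using $|Z_r|\le|Z_\ell|$.
\end{itemize}
An unmarked step then raises the rank of at least $|Z_\ell|/(16q)$ points of $Z_\ell$, so $|U_\ell|$ drops by a factor $1-\frac{1}{32tq}$. Pigeonholing over $\ell\in[0,t]$ bounds the unmarked vertices on any path by $O(t^2q\log q)$.

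Your level-set fallback points in this direction. Without the choice of $\ell$ and the popularity double count carried out at dimension $\ell$, though, it is not yet an argument.
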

\begin{proof}
    Let $t \ge 2$ be fixed and let $C = C(t)$ be a sufficiently large constant to be chosen implicitly later. Consider an arbitrary prime power $q \ge C$ and denote $G = G(t, q)$ and $D^* = D^*(t, q)$. Choosing $C$ sufficiently large, by Lemma~\ref{lem:properties}, we have that $G$ is an $(n, d, \lambda)$ graph with $n \in [q^t/2, 2q^t]$, $d \in [q^{t-1}/2, 2q^{t-1}]$ and $\lambda \le 2 \sqrt{d}$.
    
    As before, consider the rooted tree $\cT$, where each vertex is a forward independent tuple in $D^*$, with the root being the empty tuple and where the children of a forward independent $m$-tuple are the forward independent $(m+1)$-tuples obtained by appending a vertex of $D^*$. We prove the following key claim.
    
    \begin{claim}
        There is a constant $A = A(t)$ depending only on $t$, such that for $h = A q^t$ and $w = A q \log q$, it is possible to mark some vertices of $\cT$ such that every vertex of $\cT$ has at most $h$ marked children and every path in $\cT$ starting from the root contains at most $w$ unmarked vertices.
    \end{claim}
    \begin{proof}[Proof of Claim]   
        For convenience, we mark the root of $\cT$. Recall that the vertices of $G$ are the $1$-spaces of $\bF_q^{t+1}$. We call a $2m$-tuple $(a_1, b_1, \dots, a_m, b_m)$ of $1$-spaces of $\bF_q^{t+1}$ \emph{consistent} if $a_i \perp b_i$ for all $i \in [m]$, and for all $i < j, i, j \in [m]$, if $a_i \perp b_j,$ then also $a_j \perp b_i$. Note that a forward independent $m$-tuple in $D^*$ corresponds to a consistent $2m$-tuple. Indeed, $(a_i, b_i) \in V(D^*) \iff a_i \perp b_i$ and $((a_i, b_i), (a_j, b_j)) \not\in A(D^*) \iff a_i \not\perp b_j \text{ or } a_j \perp b_i.$
        
        We now consider a given consistent $2m$-tuple $\sigma = (a_1, b_1, \dots, a_m, b_m)$ and describe which of its children in $\cT$ are to be marked. For each $1$-space $y \in V(G)$ let
        \[  W(y) = W^\sigma(y) = \bigplus\{ b_i \mid a_i \perp y \} \, \text{ and }  \, r(y) = r^\sigma(y) = \dim W^\sigma(y), \]
        where we use $+$ to denote the subspace sum.

        Consider a pair of $1$-spaces $(a, b)$ that extends $\sigma$, i.e. such that $(a_1, b_1, \dots, a_m, b_m, a, b)$ is consistent. Clearly, we must have $a \perp b$. Furthermore, for any $i \in [m]$, if $a_i \perp b$, then, by consistency, we have that $a \perp b_i$. In other words, $a \perp W(b)$. Hence, for every $b \in V(G)$, the number of $a$ such that $(a, b)$ extends $\sigma$ is at most
        \begin{equation} \label{eq:extensions-given-b}
         (q^{t+1-r(b)} - 1) / (q-1) \le 2q^{t - r(b)}.
        \end{equation}
        In particular, if $r(b) = t+1,$ there is no $a$ such that $(a, b)$ extends $\sigma$.

        For $r \in [0, t+1]$, let $U_r = U_r^\sigma = \{ y \in V(G) \mid r(y) \le r \}$ and let $Z_r = Z_r^\sigma = U_r \setminus U_{r-1}$, where we denote $U_{-1} = \emptyset$. 
        
        Now, consider $r \in [0, t]$ such that $|U_r| \ge 1$. Let $\ell = \ell_r^{\sigma} \in [0, r]$ be such that $|Z_\ell|$ is maximal among $|Z_0|, |Z_1|, \dots, |Z_r|$. Note that this choice of $\ell$ implies $|Z_{\ell}| \ge |U_r| / (r+1) \ge |U_r| / (2t).$ We call $u \in Z_r$ \emph{popular} (with respect to $\sigma$ and $r$) if $u$ is contained in at least $|Z_\ell| / (16q)$ subspaces $W(y)$ with $y \in Z_\ell$. Note that for all $y \in Z_\ell$, we have $\dim W(y) = \ell$, so $W(y)$ contains $(q^{\ell} - 1) / (q-1) \le 2q^{\ell-1}$ $1$-spaces. Hence, by double counting pairs $(u, y)$ with $u \subseteq W(y), u \in Z_r, y \in Z_\ell$, we observe that the number of popular $1$-spaces is at most 
        \[ \frac{|Z_\ell| \cdot (2q^{\ell-1})}{|Z_\ell| / (16q)} = 32 q^{\ell}. \]
        We mark all the children of $\sigma$ extended by a pair $(a, b)$ with $b$ popular with respect to $\sigma$ and $r$, where $r = r^\sigma(b)$. By~\eqref{eq:extensions-given-b}, summing over all $r \in [0, t]$, it follows that this way we mark at most
        \[ \sum_{r=0}^t 32q^{\ell^\sigma_r} \cdot 2q^{t - r} \le \sum_{r=0}^t 64 q^t \le 128t q^t \]
        children of $\sigma$, where we used that $\ell_r^\sigma \le r$ for any $r \in [0, t]$.

        We call a $1$-space $a \in V(G)$ \emph{poor} (with respect to $\sigma$ and $r$) if there are at most $|Z_\ell| / (8q)$ vectors $y \in Z_\ell$ such that $a \perp y$, or equivalently, $a$ is poor if and only if $|N_G(a) \cap Z_\ell | \le |Z_\ell| / (8q)$, where $\ell = \ell_r^\sigma$. Denoting by $P_r$ the set of poor vertices with respect to $\sigma$ and $r$, by definition we have 
        \begin{equation} \label{eq:eG_P_Zell}
            e_G(P_r, Z_\ell) \le |P_r| |Z_\ell| / (8q).
        \end{equation}
        On the other hand, by Lemma~\ref{lem:expander-mixing}, we have that
        \[ e_G(P_r, Z_\ell) \ge \frac{d}{n} |P_r||Z_\ell| - \lambda \sqrt{ |P_r| |Z_\ell|} \ge \frac{|P_r||Z_\ell|}{4q} - 4q^{(t-1)/2} \sqrt{|P_r||Z_\ell|} . \]
        Combining the last two inequalities, we obtain $|P_r||Z_\ell| \le 1024 q^{t+1}$. Since $|Z_r| \le |Z_\ell|$, another application of Lemma~\ref{lem:expander-mixing} yields
        \begin{equation} \label{eq:edge-poor}
            e_G(P_r, Z_r) \le \frac{d}{n} |P_r||Z_r| + \lambda \sqrt{ |P_r| |Z_r|} \le \frac{4}{q} \cdot 1024 q^{t+1} + 4 q^{(t-1)/2} \cdot \sqrt{1024 q^{t+1}}  \le 5000 q^t.
        \end{equation}
        Mark all children of $\sigma$ extended by a pair $(a, b)$ such that $a$ is poor with respect to $\sigma$ and $r(b)$. Noting that $r(b) = r$ implies $b \in Z_r$, each such pair is an edge in $G$ between $P_r$ and $Z_r,$ for some $r \in [0,t]$. Thus, summing over all $r \in [0, t]$, by~\eqref{eq:edge-poor}, it follows that we mark at most $10^4 t q^t$ children of $\sigma$ this way. 
        
        We now record a crucial fact. Suppose that $(a, b)$ extends $\sigma$ and that the child $\sigma'$ obtained after extending $\sigma$ by $(a, b)$ is unmarked. Let $r = r(b)$ and recall that $r \in [0,t]$ and $b \in Z^\sigma_r \subseteq U^\sigma_r$. Let $\ell = \ell_r^\sigma$. Since $a$ is not poor with respect to $\sigma$ and $r$, it holds that $|N_G(a) \cap Z^\sigma_\ell| \ge |Z^\sigma_\ell| / (8q)$, and since $b$ is not popular with respect to $\sigma$ and $r$, it is contained in at most $|Z^\sigma_\ell| / (16q)$ subspaces $W^\sigma(y), y \in Z^\sigma_\ell$. Hence, there are at least $|Z^\sigma_\ell| / (16q)$ $1$-spaces $y$ such that $y \perp a$ and $b \not\subseteq W^\sigma(y)$. Observe that for such $y,$ it holds that $\dim W^{\sigma'}(y) = \dim W^{\sigma}(y) + 1 = \ell + 1$. Hence, $|U_\ell^{\sigma'}| \le |U_\ell^\sigma| - |Z_{\ell}^\sigma| / (16q)$. Noting that $|U_\ell^\sigma| \le |U_r^\sigma| \le 2t |Z_\ell^\sigma|,$ we have $|U_\ell^{\sigma'}| \le \left(1 - \frac{1}{32t q} \right) |U_\ell^{\sigma}|$. We denote $\psi(\sigma') = \ell$ to record that the corresponding set $U_\ell^{\sigma'}$ is significantly smaller than $U_\ell^\sigma$. For future reference, note that if $\psi(\sigma') = \ell$, then $|U^\sigma_\ell| \ge |Z^\sigma_\ell| \ge |U^\sigma_r| / (2t) > 0$.

        Clearly, taking $A = A(t)$ large enough, we have marked at most $h = A q^t$ children of every vertex in $\cT$. It remains to show that every path from the root contains at most $w$ unmarked vertices. Indeed, let $m \ge 0$ be arbitrary and consider a consistent $2m$-tuple $(a_1, b_1, \dots, a_m, b_m)$. For $i \in [0, m],$ denote $\sigma_i = (a_1, b_1, \dots, a_i, b_i)$. If there are more than $w$ unmarked vertices among $\sigma_1, \dots, \sigma_m$ (note that $\sigma_0$ is the root, so it is marked), then for some $\ell \in [0, t],$ there are at least $w / (t+1)$ indices $i \in [m]$ such that $\sigma_i$ is unmarked and $\psi(\sigma_i) = \ell$. However, note that $|U_\ell^{\sigma_i}|$ is non-increasing in $i$ and that $|U_{\ell}^{\sigma_i}| \le (1 - \frac{1}{32tq}) |U_{\ell}^{\sigma_{i-1}}|$ for every $i \in [m]$ such that $\sigma_i$ is unmarked and $\psi(\sigma_i) = \ell$. Recalling that $w = A q \log q$ and taking $A$ sufficiently large, we have $(1 - \frac{1}{32tq})^{w / (t+1) - 1} \le e^{-w / (100 t^2q) } < 1 / (4q^t)$. Let $i^*$ be the maximal index $i \in [m]$ such that $\sigma_i$ is unmarked and $\psi(\sigma_i) = \ell$. Since $|U_{\ell}^{\sigma_0}| = n \le 2q^t$, it follows that $|U_\ell^{\sigma_{i^*-1}}| \le n \cdot (1 - \frac{1}{32tq})^{w / (t+1) - 1} < 1$, so $U_\ell^{\sigma_{i^*-1}}$ is empty. However, as noted above, $\psi(\sigma_{i^*}) = \ell$ implies that $U_\ell^{\sigma_{i^*-1}}$ is non-empty, a contradiction.
    \end{proof}
    Let $A, h$ and $w$ be given by the above claim. Since a forward independent $k$-tuple corresponds to a path in $\cT$ of length $k$ from the root, we may apply Lemma~\ref{lem:tree-counting} with $\Delta = |V(D^*)| \le 4 q^{2t-1}, h$ and $w$ to conclude that
    \[ \fwi{k}(D^*) \le 2^k (4 q^{2t-1})^w h^{k-w} \le 2^k q^{2tw} h^k \le 4^k (A q^t)^k \le (C q^t)^k, \]
    where in the third inequality we used that $k \ge C q (\log q)^2$ so that $q^{2tw} \le 2^k$, and in the last inequality we used that $C \ge 4A$.
\end{proof}
\begin{proof}[Proof of Theorem~\ref{thm:main}]
    Let $s \ge 3$ be fixed and assume that $k$ is sufficiently large. Let $t = s-1$ and let $C = C(t)$ be the constant given by Lemma~\ref{lem:fw-ind-sets-s1}. Let $q$ be a prime power, say a power of $2$, such that $C q (\log q)^2 \le k \le 4 C q (\log q)^2$, which clearly exists for $k$ sufficiently large. Let $D^* = D^*(t, q)$. By Lemma~\ref{lem:Ts1-free}, $D^*$ is $T_s$-free and by Lemma~\ref{lem:fw-ind-sets-s1}, we have $\fwi{k}(D^*) \le (Cq^t)^k$. Applying Lemma~\ref{lem:Ts-free-Ks-free}, we obtain a $K_s$-free graph $\Gamma^*$ with $|V(\Gamma^*)| = |V(D^*)| \ge q^{2t-1} / 4$ and 
    \[ i_k(\Gamma^*) \le \left( \frac{e}{k} \right)^k \fwi{k}(D^*) \le \left(\frac{e}{k}\right)^k (C q^t)^k. \]
    Setting $p = \frac{k}{e} (C q^t)^{-1},$ we have $p^k i_k(\Gamma^*) \le 1$. Clearly $p \le 1,$ so by Lemma~\ref{lem:sampling-Ks-free}, we conclude that
    \[ r(s, k) \ge p |V(\Gamma^*)| - 1 \ge \frac{k}{e C q^t} \cdot q^{2t-1} / 4 - 1 \ge \frac{k q^{t-1}}{20 C} \ge c_s \frac{k^{s-1}}{(\log k)^{2s-4},} \]
    for some constant $c_s > 0$ depending only on $s$, where in the last inequality we used that $q \ge c_s' k / (\log k)^2$ for a positive constant $c_s'$ depending only on $s$.
\end{proof}

We remark that for $s=3$, the $K_3$-free graph $\Gamma^*$ obtained in the above proof is very similar to a construction used by Codenotti, Pudl\'{a}k and Resta~\cite{codenotti-pudlak-resta} to provide a constructive lower bound for $r(3, k)$.

\section{Improved lower bounds close to the diagonal} \label{sec:close}
In this section, we improve the lower bounds for Ramsey numbers close to the diagonal as well as for diagonal multicolor Ramsey numbers. Roughly speaking, to obtain these results, we construct a $K_s$-free graph on about $4^s$ vertices in which the number of independent sets of size $s$ or a bit larger is not much greater than expected in a random graph. Perhaps unsurprisingly, we will use the field of size $q = 2$ and we use the construction given in Subsection~\ref{subsec:old}. Note that the digraphs $D^*(t, 2)$ constructed in Subsection~\ref{subsec:s-1} have density about $1/4$ so they are not suitable for the applications in this section.

As before, we start by constructing a $T_s$-free digraph with few forward independent tuples. We shall not be able to bound the number of such tuples using a spectral approach, but luckily, they can efficiently be counted directly. We remark that essentially the same argument was used by Conlon and Ferber~\cite{conlon-ferber} to count cliques in $G(t, q)$.
\begin{lemma} \label{lem:F2-fw-indep-sets}
    Let $s, k$ be given integers with $4 \le s \le k$. There exists a $T_s$-free directed graph $D$ without loops on $N = 2^{2s-3} - 2^{s-1} - 2^{s-2} + 1$ vertices such that 
    \[ \fwi{k}(D) \le \sum_{t=1}^{s-1} \binom{k}{t} 2^{(s-1)(t+k) - \binom{t+1}{2}}. \]
    In particular, if $k = s+a$, where $a \ge 0$ and $a = o(s)$, then
    \[ \fwi{k}(D) \le 2^{\frac{3}{2}s^2 + as - \frac{5}{2}s + o(s)}. \]
\end{lemma}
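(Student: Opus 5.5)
The plan is to take $D$ to be (a subdigraph of) $D(t,2)$ with $t = s-2$, as constructed in Subsection~\ref{subsec:old}, and to count forward independent $k$-tuples in it directly by linear algebra over $\bF_2$, with no spectral input. Over $\bF_2$ each point of $PG(s-2,2)$ is a single nonzero vector of $\bF_2^{s-1}$, so $G(s-2,2)$ has $2^{s-1}-1$ vertices. A vertex of $D$ is a pair $(\bx,\by)$ of nonzero vectors with $\langle \bx,\by\rangle = 1$, and there are $(2^{s-1}-1)2^{s-2}$ such pairs. The target $N = (2^{s-1}-1)(2^{s-2}-1)$ is obtained by deleting, for each $\by$, one suitably chosen $\bx$ with $\langle \bx,\by\rangle=1$. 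I would pin down the precise choice at the end so that $D$ has no loops; this is harmless, since deleting vertices preserves $T_s$-freeness (Lemma~\ref{lem:t+2-free}) and can only decrease $\fwi{k}$. So everything reduces to bounding $\fwi{k}(D(s-2,2))$.

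Unwinding the definitions as in the proof of Lemma~\ref{lem:old-fw-indep-counting}, a forward independent $k$-tuple is a sequence $(\bx_1,\by_1),\dots,(\bx_k,\by_k)$ with $\langle \bx_i,\by_j\rangle = 1$ for all $i\le j$. I would first fix $\by_1,\dots,\by_k$ and count the $\bx_i$. The constraints on $\bx_i$ form the affine system $\langle \bx_i,\by_j\rangle = 1$ for $j\ge i$, which has at most $2^{(s-1)-r_i}$ solutions, where $r_i = \dim \mathrm{span}(\by_i,\dots,\by_k)$. The rank sequence $r_1\ge r_2\ge\dots\ge r_k\ge 1$ drops by at most one at each step. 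Let $T$ be the set of positions where it increases when read backwards from $i=k$; position $k$ always belongs to $T$, and $|T| = r_1 =: t\in[1,s-1]$.

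Next I would count the $\by$'s backwards. If $i\notin T$, then $\by_i\in \mathrm{span}(\by_{i+1},\dots,\by_k)$, giving at most $2^{r_i}$ choices. If $i\in T$, there are at most $2^{s-1}$ choices. Multiplying the $\bx$- and $\by$-contributions, the exponent is
\[
\sum_{i}(s-1-r_i) + \sum_{i\notin T} r_i + t(s-1) = (s-1)(k+t) - \sum_{i\in T} r_i .
\]
The values $r_i$ for $i\in T$ are exactly $1,2,\dots,t$, so the exponent equals $(s-1)(k+t) - \binom{t+1}{2}$. There are at most $\binom{k}{t}$ choices of $T$, and summing over $t$ gives the first bound.

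For the ``in particular'' part, set $k = s+a$ and $f(t) = (s-1)(k+t)-\binom{t+1}{2}$. This is increasing for $t\le s-2$, and $f(s-1) = (s-1)(\tfrac32 s - 1 + a) = \tfrac32 s^2 + as - \tfrac52 s - a + 1$. For $t = s-1-j$ we have $f(t)\le f(s-1) - j(s-2-j)/2$ roughly, and $\binom{k}{t} = \binom{s+a}{a+1+j}\le 2^{(a+1+j)\log_2(es)}$. Thus $t$ near $s-1$ contributes $2^{f(s-1)+o(s)}$, because $a=o(s)$. Smaller $t$ are dominated: the quadratic loss $j(s-2-j)/2$ beats $j\log s$ once $j\gg \log s$, and stays nonnegative up to $j\approx s$. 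The main care point is this final optimization together with the exact choice of deleted vertices giving $N$; the counting itself should be routine.
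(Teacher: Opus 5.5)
Your count for the first inequality is correct, and it is the paper's argument in mirror image. The paper reads the tuple forwards, tracks $\dim\mathrm{span}\{\ba_1,\dots,\ba_i\}$, and picks $\ba_i$ and then $\bb_i$. You read it backwards and track $\dim\mathrm{span}\{\by_i,\dots,\by_k\}$. Reversing the tuple and swapping the two coordinates turns one argument into the other, and both give $2^{(s-1)(k+t)-\binom{t+1}{2}}$ per rank pattern. Your vertex count is also right, and more careful than the paper's. One has $|V(D(s-2,2))|=(2^{s-1}-1)2^{s-2}$, which exceeds $N$ by $2^{s-1}-1$. The paper's expression $(2^{p}-1)(2^{p-1}-1)$ counts ordered orthogonal pairs, not non-orthogonal ones. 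So passing to an induced subdigraph on $N$ vertices is the right way to get the statement as written. No care is needed in choosing the deleted vertices, however: $D(s-2,2)$ has no loops at all, since a loop at $(\bx,\by)$ would require $\langle \bx,\by\rangle=0$.

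The gap is in the ``in particular'' estimate, and there are two problems.

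\emph{The loss is misstated.} In fact $f(s-1)-f(s-1-j)=\binom{j}{2}$ exactly, not $j(s-2-j)/2$. For instance $f(s-2)=f(s-1)$, so your inequality is false for $1\le j<(s-1)/2$, which is precisely the range that matters.

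\emph{The binomial bound is too weak (the more serious problem).} Already at $j=0$, the bound $\binom{s+a}{a+1+j}\le (es)^{a+1+j}$ costs a factor $2^{(a+1)\log_2(es)}$. This is $2^{o(s)}$ only when $a=o(s/\log s)$. For, say, $a=s/\log\log s$ it is $2^{\omega(s)}$, so ``because $a=o(s)$'' does not give the claimed $o(s)$ error term over the lemma's full range.

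The fix is the sharper estimate the paper uses. Since $a+1+j\ge a+1$, we have $\binom{s+a}{a+1+j}\le \left(\frac{e(s+a)}{a+1}\right)^{a+1+j}$. Write $L=\log_2\frac{e(s+a)}{a+1}$. Then the term with $t=s-1-j$ is at most $2^{f(s-1)+(a+1)L+jL-\binom{j}{2}}$. Here $(a+1)L=o(s)$, because $x\log(1/x)\to 0$ as $x=(a+1)/s\to 0$. Also $\max_j\bigl(jL-\binom{j}{2}\bigr)=O(\log^2 s)$. Summing over the at most $s$ values of $t$ and using $f(s-1)\le \frac32 s^2+as-\frac52 s+1$ gives the stated bound.
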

\begin{proof}
    Let $G = G(s-2, 2)$ and let $D = D(s-2, 2)$ be the directed graph constructed in Subsection~\ref{subsec:old}. We remind the reader that $V(D) = \{ (x, y) \mid xy \not\in E(G) \}$ and there is an arc from $(x, y)$ to $(x', y')$ if and only if $xy' \in E(G)$. Note that $D$ has no loops. By Lemma~\ref{lem:Hs-free}, $D$ is $T_s$-free. Denote $p = s-1$ and note that we may identify the vertex set of $G$ with $\mathbb{F}_2^p \setminus \{ \overrightarrow{0} \}$, where $\bx \by \in E(G) \iff \langle \bx, \by \rangle = 0.$ Observe that $|V(D)| = 2 |E(G)| = (2^p - 1) \cdot (2^{p-1} - 1) = 2^{2s-3} - 2^{s-1} - 2^{s-2} + 1$, as claimed.
    
    It remains to upper bound the number of forward independent $k$-tuples in $D$. Note that a forward independent $k$-tuple in $D$ corresponds to a $2k$-tuple $(\ba_1, \bb_1, \ba_2, \bb_2, \dots, \ba_k, \bb_k) \in (\mathbb{F}_2^p)^{2k}$ such that $\langle \ba_j, \bb_i \rangle = 1$ for all $i,j \in [k]$ with $j \le i.$ Let us call such a $2k$-tuple \emph{bad}.

    For a bad $2k$-tuple $(\ba_1, \bb_1, \dots, \ba_k, \bb_k) \in (\mathbb{F}_2^p)^{2k}$, we shall call the sequence $(r_0, r_1, \dots r_k)$ with $r_0 = 0$ and $r_i = \dim( \mathrm{span} \{ \ba_1, \dots, \ba_i \})$, for $i \in [k]$, the \emph{rank sequence} of this $2k$-tuple. We call a sequence $(r_0, r_1, \dots, r_k)$ \emph{valid} if it satisfies $r_0 = 0, r_1 = 1, r_i \in \{r_{i-1}, r_{i-1} + 1\}$, for all $i \in [k]$ and $r_k \le p$. Note that a sequence which is not valid cannot be a rank sequence of a $2k$-tuple in $(\mathbb{F}_2^p)^{2k}$.
    
    We count the number of bad $2k$-tuples as follows. Let us fix an arbitrary valid sequence $\br = (r_0, r_1, \dots, r_k)$. Consider a fixed index $i \in [k]$ and suppose we have already chosen $(\ba_1, \bb_1, \dots, \ba_{i-1}, \bb_{i-1})$ with rank sequence $(r_0, r_1, \dots, r_{i-1})$. If $r_i = r_{i-1}$, then $\ba_i \in \mathrm{span} \{ \ba_1, \dots, \ba_{i-1} \}$, so there are at most $2^{r_i}$ choices for $\ba_i$, and if $r_i = r_{i-1} + 1,$ we loosely bound the number of choices for $\ba_i$ by $2^p$. Furthermore, given $(\ba_1, \bb_1, \dots, \ba_{i-1}, \bb_{i-1}, \ba_i)$, there are at most $2^{p - r_i}$ choices for $\bb_i$. Indeed, by definition of a bad $2k$-tuple, we have $\langle \ba_j, \bb_i \rangle = 1$, for all $j, 1 \le j\le i$. Since $\dim (\mathrm{span} \{\ba_1, \dots, \ba_i \}) = r_i,$ there are either zero or $2^{p - r_i}$ choices for $\bb_i$ given $(\ba_1, \bb_1, \dots, \ba_{i-1}, \bb_{i-1}, \ba_i)$.

    Denote $t = r_k$. In total, the number of bad $2k$-tuples with rank sequence $\br$ is at most
    \begin{align*}
        \left( \prod_{i \in [k] \colon r_i = r_{i-1} + 1} 2^p \cdot 2^{p - r_i}\right) \left( \prod_{i \in [k] \colon r_i = r_{i-1}} 2^{r_i} \cdot 2^{p - r_i} \right) &= \left(\prod_{i=1}^t 2^{2p - i}\right) 2^{p(k - t)} = 2^{2p t - \binom{t + 1}{2} + pk - pt} = 2^{pt + pk - \binom{t+1}{2}}.
    \end{align*}
    Note that, for any $t \in [p],$ there are at most $\binom{k}{t}$ valid sequences $(r_0, r_1, \dots, r_k)$ with $r_k = t$. Hence, the total number of bad $2k$-tuples is at most
    \begin{align*}
        \sum_{t=1}^p \binom{k}{t} 2^{pt + pk - \binom{t+1}{2}} = \sum_{t=1}^{s-1} \binom{k}{t} 2^{(s-1)(t+k) - \binom{t+1}{2}},
    \end{align*}
    as needed.

    Now, denote $k = s+a$ and assume that $a \ge 0$ and $a = o(s)$. For $t \in [s-1],$ denote the $t$-th summand by $M_t = \binom{k}{t} 2^{(s-1)(t+k) - \binom{t+1}{2}}$. Writing $t = s - j$, we have $\binom{k}{t} = \binom{k}{k-t} = \binom{s+a}{j+a},$ thus for $j \in [s-1]$, we have 
    \[ M_{s-j} = \binom{s+a}{j+a} 2^{(s-1)(s-j+k) - \binom{s-j+1}{2}} \le \left(\frac{e(s+a)}{a+1}\right)^{j+a} \cdot 2^{s^2/2 + sk - 3s/2 - k + 3j/2 - j^2/2}. \]
    The right hand side is maximized at $j = \log_2\left( 
    \frac{e(s+a)}{a+1} \right) + \frac{3}{2},$ thus for all $t \in [s-1]$, we have
    \[ M_t \le \left(\frac{e(s+a)}{a+1}\right)^a \cdot 2^{s^2/2 + sk - 3s/2 - k + O(\log s)^2}. \]
    Since $a = o(s)$, we have $\left(\frac{e(s+a)}{a+1}\right)^a = 2^{o(s)}$. Hence, for all $t \in [s-1],$ it holds that
    \[ M_t \le 2^{s^2/2 + sk - 3s/2 - k + o(s)}. \]
    Summing over all $t$, we conclude that 
    \begin{equation*}
        \fwi{k}(D) \le s \cdot 2^{s^2/2 + sk - 3s/2 - k + o(s)} = 2^{\frac{3}{2}s^2 + as - \frac{5}{2}s + o(s)}. \qedhere
    \end{equation*}
\end{proof}
We remark that we could have been more careful when counting the number of choices for $a_i$, but this would not lead to a significant improvement of the final bounds. It might be interesting to note the similarity between the proofs of Lemma~\ref{lem:old-fw-indep-counting} and Lemma~\ref{lem:F2-fw-indep-sets}. Indeed, the unmarked vertices in the proof of the former correspond to the indices $i$ with $r_i = r_{i-1} + 1$ in the proof of the latter.

Next, we prove our claimed lower bound for Ramsey numbers extremely close to the diagonal.

\begin{proof}[Proof of Theorem~\ref{thm:close}]
    Denote $k = s+a$. By Lemma~\ref{lem:F2-fw-indep-sets}, there exists a $T_s$-free directed graph $D$ on $N = 2^{2s-3} - 2^{s-1} - 2^{s-2} + 1$ vertices satisfying 
    \[ \fwi{k}(D) \le 2^{\frac{3}{2}s^2 + as - \frac{5}{2}s + o(s)}. \]    
    Applying Lemma~\ref{lem:Ts-free-Ks-free}, we obtain a $K_s$-free graph $G$ on $N$ vertices with $i_k(G) \le \left( \frac{e}{k}\right)^k \fwi{k}(D)$. If $i_k(G) = 0, $ then $r(s, k) \ge N,$ which is clearly sufficient. Otherwise, applying Lemma~\ref{lem:sampling-Ks-free} with $p = i_k(G)^{-1/k}$, we get
    \begin{align*}
        r(s, s+a) &\ge N \cdot i_k(G)^{-1/k} - 1 \ge (1+o(1))2^{2s-3} \cdot \frac{s}{e} \cdot 2^{(-\frac{3}{2}s^2 - as + \frac{5}{2}s + o(s)) / (a+s)}\\ &= (1+o(1))2^{2s-3} \cdot \frac{s}{e} \cdot 2^{-\frac{3s}{2} + \frac{a}{2} + \frac{5}{2} - \frac{a^2 + 5a}{2(a+s)}}
        \ge (1 + o(1)) \frac{s}{e}\cdot 2^{(s + a - 1)/2 - a^2 / (2s)},
    \end{align*}
    as claimed.
\end{proof}

We remark that for the diagonal case $a = 0,$ Theorem~\ref{thm:close} recovers Erd\H{o}s' lower bound~\cite{erdos-lb} up to a $(1 + o(1))$ factor. Next, we derive our improved lower bounds for two color Ramsey numbers slightly off the diagonal. Observe that Theorem~\ref{thm:close} improves the lower bound~\eqref{eq:spencer-close-to-diag} on $r(s, s+a)$ given by Spencer's method for any $a \ge 5$ and $a = o(s)$. When $a$ is linear in $s$, Theorem~\ref{thm:k-Ck} yields a more convenient bound, which we prove next.

\begin{proof}[Proof~of~Theorem~\ref{thm:k-Ck}]
    Denote $k = Cs$. Let $D$ be the digraph given by Lemma~\ref{lem:F2-fw-indep-sets} and denote $N = |V(D)| = (1+o(1)) 2^{2s-3}$ and recall that $\fwi{k}(D) \le \sum_{t=1}^{s-1} \binom{k}{t} 2^{(s-1)(t+k) - \binom{t+1}{2}}$.

    Note that the quadratic expression $(s-1)(t+k) - \binom{t+1}{2}$ is maximized at $t = s - 3/2,$ where its value equals $(s-1)k + \frac{1}{2}(s - 3/2)^2$. Thus, we may crudely bound $\fwi{k}(D)$ by
    \[ \fwi{k}(D) \le 2^k \cdot 2^{(s-1)k + \frac{1}{2}(s - 3/2)^2} \le  2^{sk + s^2/2}, \]
    where we used that $s$ is sufficiently large.

    Applying Lemma~\ref{lem:Ts-free-Ks-free}, we obtain a $K_s$-free graph $G$ on $N$ vertices with $i_k(G) \le \left( \frac{e}{k}\right)^k \fwi{k}(D)$. If $i_k(G) = 0, $ then $r(s, k) \ge N,$ which is clearly sufficient. Otherwise, applying Lemma~\ref{lem:sampling-Ks-free} with $p = i_k(G)^{-1/k}$, yields
    \[ r(s, k) \ge N \cdot i_k(G)^{-1/k} - 1 \ge (1 + o(1)) 2^{2s-3} \cdot \frac{s}{e} \cdot 2^{(sk + s^2/2) \cdot (-1 / k)} - 1 \ge 2^{\left(1 - \frac{1}{2C} \right)s}, \]
    for large enough $s$.
\end{proof}

Lastly, using the random homomorphism approach of He and Wigderson~\cite{he-wigderson}, which traces its roots to the work of Alon and R\"{o}dl~\cite{alon-rodl}, we obtain an improvement on the diagonal multicolor Ramsey numbers.
\begin{proof}[Proof~of~Theorem~\ref{thm:multicolor}]
    Let $D$ be the $T_s$-free digraph with $N = (1+o(1)) 2^{2s-3}$ vertices and $\fwi{s}(D) \le 2^{\frac{3}{2}s^2 - \frac{5}{2}s + o(s)}$ given by Lemma~\ref{lem:F2-fw-indep-sets}. Let $n = 2^{(s/2 - 4)(\ell-1)}$. We will construct an $\ell$-coloring of the complete graph on the vertex set $[n]$ with no monochromatic clique of size $s$, which will imply the statement. The coloring is defined as follows. For each $c \in [\ell-1]$, let $\phi_c \colon [n] \rightarrow V(D)$ be a uniformly random mapping chosen independently for different $c \in [\ell-1]$. For any $i, j$ with $1 \le i < j \le n$, color the edge $ij$ with the minimum color $c \in [\ell-1]$ such that $(\phi_c(i), \phi_c(j)) \in A(D),$ and if no such color exists, color $ij$ with color $\ell$.

    Observe that there is no monochromatic copy of $K_s$ in any of the colors $c \in [\ell-1]$. Indeed, suppose that vertices $v_1, \dots, v_s$ with $1 \le v_1 < v_2 < \dots < v_s \le n$ form a monochromatic clique in color $c \in [\ell-1].$ By definition, it follows that for all $i, j \in [s], i<j$, we have that $(\phi_c(v_i), \phi_c(v_j)) \in A(D)$. Since $D$ has no loops, it follows that $\{\phi_c(v_1), \phi_c(v_2), \dots, \phi_c(v_s)\}$ forms a copy of $T_s$ (in this order) in $D$, a contradiction.

    For vertices $v_1, \dots, v_s$ with $1 \le v_1 < v_2 < \dots v_s \le n$, the set $\{v_1, \dots, v_s\}$ forms a monochromatic clique in color $\ell$ if and only if $(\phi_c(v_1), \phi_c(v_2), \dots, \phi_c(v_s))$ is a forward independent $s$-tuple in $D$, for each $c \in [\ell-1]$. Hence, the expected number of monochromatic cliques of size $s$ of color $\ell$ is at most
    \[ \binom{n}{s} \left( \frac{\fwi{s}(D)}{N^s} \right)^{\ell-1} < n^s \cdot \left( \frac{2^{3s^2 / 2}}{(2^{2s-4})^s} \right)^{\ell-1} = n^s \cdot 2^{(-s^2/2 + 4s)(\ell-1)} = 1. \]
    Hence, with positive probability, there is no monochromatic clique of size $s$ in color $\ell$, completing the proof.
\end{proof}

\section{Concluding remarks} \label{sec:concluding}
\begin{itemize}    
    \item \textbf{The Erd\H{o}s-Rogers function.} For fixed positive integers $p, s$ with $2 \le p < s,$ the so-called Erd\H{o}s-Rogers function, which we denote by $f_{p, s}(n)$, is the maximum value of $m$ such that any $K_s$-free graph on $n$ vertices contains an induced subgraph on $m$ vertices with no copy of $K_p$. Note that $f_{p, s}$ generalizes the Ramsey numbers since $f_{2, s}(n) \ge m$ if and only if $r(s, m) \le n$. For general $p$ and $s$, the best known upper bound is due to Krivelevich~\cite{krivelevich-erdos-rogers} who showed that $f_{p, s}(n) = O_{p,s}(n^{\frac{p}{s+1}} (\log n)^{\frac{1}{p-1}}).$ For $s \ge 2p,$ Theorem~\ref{thm:main} implies an improvement on the exponent for this function, namely it shows that $f_{p, s}(n) \le n^{\frac{p-1}{s-1} + o(1)}.$ Indeed, consider the $n$-vertex $K_s$-free graph with independence number at most $k = n^{\frac{1}{s-1} + o(1)}$ guaranteed by Theorem~\ref{thm:main} and suppose it contains a $K_p$-free induced subgraph on $t$ vertices. Then, using the upper bound in~\eqref{eq:old-bounds}, we have $t \le r(p, k) \le k^{p-1 + o(1)} = n^{\frac{p-1}{s-1} + o(1)},$ as claimed. I thank Oliver Janzer for pointing out this implication.
    
    \item \textbf{On pseudorandomness.} The $K_s$-free graph $\Gamma$ obtained in the proof of Theorem~\ref{thm:main} does not have sufficiently small eigenvalues to deduce a good bound on the number of independent sets directly from its spectrum. Indeed, let $U$ and $W$ be two mutually orthogonal subspaces of $\bF_q^s$ of dimension $s/2,$ where, for simplicity, we assume $s$ is even. Then, with high probability, $\Gamma$ contains an almost complete bipartite subgraph where one side contains vertices of the form $(u, x)$ with $u \in U$ and the other vertices of the form $(y, w)$ with $w \in W$ and the two parts have size $\Theta(q^{(s/2) - 1} \cdot q^{s-2}) = \Theta(q^{3s/2 - 3}).$ The number of edges inside the parts will be much smaller than the number of edges across, so this subgraph induces a graph with smallest eigenvalue at most $-\Theta(q^{3s/2 - 3})$ and by Cauchy's interlacing theorem, so does $\Gamma$. On the other hand, the average degree of $\Gamma$ is $\Theta(q^{2s-4})$.
\end{itemize}

\textbf{Acknowledgements:} I am grateful to the team at OpenAI for sharing the improvement which lead to the tight exponent in Theorem~\ref{thm:main} and allowing me to include it in the present paper. I would like to thank Yuval Wigderson for helpful discussions and a thorough reading of an earlier draft of this paper. I also acknowledge helpful discussions and comments from Zach Hunter, Oliver Janzer, Zhihan Jin, Sam Mattheus, Aleksa Milojevi\'{c}, Rob Morris, Benny Sudakov, Jacques Verstraete and Liana Yepremyan.

\bibliographystyle{plain}
\bibliography{references}

\appendix
\section{Spencer's lower bound close to the diagonal}
Let $s \rightarrow \infty$ and suppose that $a = o(s)$. Then, the best lower bound that can be achieved by Spencer's method~\cite{spencer77} using the Lov\'{a}sz local lemma is given by~\eqref{eq:spencer-close-to-diag}, which we restate for convenience.
\[ r(s, s+a) \ge (1 + o(1)) \frac{s}{e} \cdot 2^{(s+1)/2 + a/4 + O(a^2/s)}. \]
To simplify the exposition, we shall not actually prove this bound, but rather only show a better bound cannot be achieved with this method. We recall the asymmetric version of the local lemma.
\begin{lemma}[e.g.~\cite{alon-spencer}] \label{lem:lll}
    Let $A_1, \dots, A_m$ be events in an arbitrary probability space. A directed graph $D = (V, E)$ on the set of vertices $V = \{1, 2, \dots, m\}$ is called a dependency digraph for the events $A_1, \dots, A_m$ if for each $i, 1 \le i \le m$, the event $A_i$ is mutually independent of all the events $\{ A_j \mid (i,j) \not\in E\}.$ Suppose that $D$ is a dependency digraph for the above events and suppose there are real numbers $x_1, \dots, x_m$ such that $0 \le x_i < 1$ and $\Pr[A_i] \le x_i \prod_{(i,j) \in E}(1 - x_j)$, for all $1 \le i \le m$. Then, with positive probability, none of the events $A_i, i \in [m]$ hold.
\end{lemma}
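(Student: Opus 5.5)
The plan is the standard inductive argument for the local lemma. The key intermediate statement is the following: for every $i \in [m]$ and every $S \subseteq [m] \setminus \{i\}$ with $\Pr[\bigcap_{j \in S} \overline{A_j}] > 0$,
\[ \Pr\Big[A_i \,\Big|\, \bigcap_{j \in S} \overline{A_j}\Big] \le x_i . \]
Granting this, the chain rule finishes the proof. Write
\[ \Pr\Big[\bigcap_{i=1}^m \overline{A_i}\Big] = \prod_{i=1}^m \Big(1 - \Pr\Big[A_i \,\Big|\, \bigcap_{j < i} \overline{A_j}\Big]\Big) \ge \prod_{i=1}^m (1 - x_i) > 0, \]
using $x_i < 1$. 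The conditioning events are nonempty inductively: each prefix probability is at least $\prod_{j<i}(1-x_j) > 0$, so every conditional probability above is well defined.

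I will prove the key statement by induction on $|S|$. For $S = \emptyset$ it follows directly from the hypothesis, since $\Pr[A_i] \le x_i \prod_{(i,j)\in E}(1-x_j) \le x_i$. For the inductive step, split $S$ into $S_1 = \{ j \in S \mid (i,j) \in E\}$ and $S_2 = S \setminus S_1$. If $S_1 = \emptyset$, then by mutual independence of $A_i$ from $\{A_j \mid j \in S_2\}$ the conditional probability equals $\Pr[A_i] \le x_i$. Otherwise, write $B_1 = \bigcap_{j\in S_1}\overline{A_j}$ and $B_2 = \bigcap_{j \in S_2}\overline{A_j}$, so that
\[ \Pr[A_i \mid B_1 \cap B_2] = \frac{\Pr[A_i \cap B_1 \mid B_2]}{\Pr[B_1 \mid B_2]}. \]
For the numerator, use $\Pr[A_i\cap B_1 \mid B_2] \le \Pr[A_i \mid B_2] = \Pr[A_i] \le x_i \prod_{(i,j)\in E}(1-x_j)$, again by mutual independence.

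For the denominator, list $S_1 = \{j_1, \dots, j_r\}$ and expand by the chain rule:
\[ \Pr[B_1 \mid B_2] = \prod_{l=1}^r \Big(1 - \Pr\Big[A_{j_l} \,\Big|\, \bigcap_{u<l}\overline{A_{j_u}} \cap B_2\Big]\Big). \]
Each conditioning set here is a subset of $S \setminus \{j_l\}$ of size strictly less than $|S|$, so the induction hypothesis bounds each factor below by $1 - x_{j_l}$. Hence $\Pr[B_1\mid B_2] \ge \prod_{j\in S_1}(1-x_j) \ge \prod_{(i,j)\in E}(1-x_j)$, and dividing gives the bound $x_i$.

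The only delicate point is bookkeeping in this denominator step. The induction must be applied to the index $j_l$ with a conditioning set not containing $j_l$, which is why the claim is stated for all $i$ and all $S \not\ni i$ simultaneously. I also need positivity of every conditioning event. That follows because positivity of $\Pr[\bigcap_{j\in S}\overline{A_j}]$ implies positivity for all subsets. Alternatively, one can prove positivity alongside the claim via the same chain-rule product bound $\prod(1-x_j) > 0$.
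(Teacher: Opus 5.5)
Your proof is correct. The paper gives no proof of this lemma and only cites Alon--Spencer, and your argument is essentially the standard proof found there: strong induction on $|S|$ for the conditional bound, splitting $S$ into $S_1$ and $S_2$, and finishing with the chain rule. Your explicit check that every conditioning event has positive probability is a detail the textbook version leaves implicit.
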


Following Spencer, let $G$ be the binomial random graph on $n$ vertices with edge probability $p$, where we shall optimize for $p$ later. Denote $k = s+a$. For each set $S$ of $s$ vertices, let $A_S$ denote the event that $S$ forms a clique and, analogously, for each set $T$ of $k$ vertices, let $B_T$ denote the event that $T$ forms an independent set in $G$.

Aiming to use Lemma~\ref{lem:lll}, we assign to each event of type $A_S$, the same value $x_A$ and to each event of type $B_T,$ the same value $x_B$. Note that $\Pr[A_S] = p^{\binom{s}{2}}$ and $\Pr[B_T] = (1 -p)^{\binom{k}{2}}$. For each $s$-set $S$, there are at least $\binom{s}{2} \binom{n-s}{s-2}$ events of type $A_{S'}$ on which $A_S$ depends, and analogously for each $k$-set $T$, there are at least $\binom{k}{2} \binom{n-k}{k-2}$ events of type $B_{T'}$ that $B_T$ depends on. To apply~\ref{lem:lll}, in particular we require
\[ p^{\binom{s}{2}} \le x_A (1 - x_A)^{\binom{s}{2} \binom{n-s}{s-2}} \le x_A \exp\left(-x_A \binom{s}{2} \binom{n-s}{s-2}\right). \]
The right hand side is maximized by taking $x_A = \left(\binom{s}{2} \binom{n-s}{s-2}\right)^{-1}$, which implies
\[ p^{\binom{s}{2}} \le \left(\binom{s}{2} \binom{n-s}{s-2}\right)^{-1}. \]
Rearranging, we have
\[ (1+o(1)) \binom{s}{2} n^{s-2} / (s-2)! \le p^{-\binom{s}{2}}, \]
implying
\[ n \le (1 + o(1)) \frac{s}{e} \cdot (1/p)^{(s+1) / 2 + 1 / (s-2)}. \]
Completely analogously, we obtain
\[ n \le (1 + o(1)) \frac{s}{e} \cdot (1 / (1-p))^{(k+1)/2 + 1 /(k-2)}, \]
where we used that $k = (1+o(1))s$.

Together, these clearly imply $p \in [1/4, 3/4]$, so we may drop the terms $2 / (s-2)$ and $2/(k-2)$ from the previous equations to get
\begin{equation} \label{eq:lll-1}
    n \le (1 + o(1)) \frac{s}{e} \cdot (1/p)^{(s+1) / 2},
\end{equation} 
and,
\begin{equation} \label{eq:lll-2}
    n \le (1 + o(1)) \frac{s}{e} \cdot (1 / (1-p))^{(k+1)/2}.
\end{equation}

To maximize $n$, we solve for $p$ the equation
\[ \frac{s}{e} \cdot (1/p)^{(s+1) / 2} = \frac{s}{e} \cdot (1 / (1-p))^{(k+1)/2}. \]
Writing $p = \frac{1}{2}(1-\delta)$, the above reduces to
\[ \left( \frac{1+\delta}{1-\delta}\right)^{s+1} (1 + \delta)^{a} = 2^a. \]
The left hand side is at least $(1 + 2\delta)^{s} \ge e^{(2\delta - 4\delta^2) s},$ so $e^{(2\delta - 4\delta^2) s} \le 2^a,$ which, by monotonicity of the right hand side of \eqref{eq:lll-2} in $\delta$ and using that $a = o(s)$, implies that $\delta \le \frac{a \log 2}{2s} + O(a^2/s^2)$. Plugging back into \eqref{eq:lll-1}, finally yields
\[ n \le (1+o(1)) \frac{s}{e} \cdot \left( \frac{2}{1-\delta} \right)^{\frac{s+1}{2}} 
\le (1+o(1)) \frac{s}{e} \cdot 2^{\frac{s+1}{2}} \cdot (1 + \delta + 2\delta^2)^{\frac{s+1}{2}} \le (1+o(1)) \frac{s}{e} \cdot 2^{\frac{s+1}{2} + \frac{a}{4} + O\left( \frac{a^2}{s}\right)}. \]

\end{document}